\numberwithin{equation}{section}
\theoremstyle{plain}
\newtheorem{theorem}{Theorem}[section]
\newtheorem{proposition}[theorem]{Proposition}
\newtheorem{lemma}[theorem]{Lemma}
\newtheorem{conjecture}[theorem]{Conjecture}
\theoremstyle{remark}
\theoremstyle{definition}
\newtheorem{question}[theorem]{Question}
\newcommand{\iii}{\mathtt{i}}
\newcommand{\jjj}{\mathtt{j}}
\newcommand{\kkk}{\mathtt{k}}
\DeclareMathOperator{\dimaff}{dim_{aff}}
\begin{document}

\title[Observations on K\"aenm\"aki measures]{Some observations on K\"aenm\"aki measures}

\author{Ian D. Morris}
\address{Department of Mathematics\\
	University of Surrey\\
	Guildford GU2 7XH\\
	United Kingdom}
\email{i.morris@surrey.ac.uk}

\subjclass[2000]{Primary 28A80, 37D35, 37H15}
\date{\today}

\begin{abstract}
In this note we investigate some properties of equilibrium states of affine iterated function systems, sometimes known as \emph{K\"aenm\"aki measures}. We give a simple sufficient condition for K\"aenm\"aki measures to have a gap between certain specific pairs of Lyapunov exponents, partially answering a question of B. B\'ar\'any, A. K\"aenm\"aki and H. Koivusalo. We also give sharp bounds for the number of ergodic K\"aenm\"aki measures in dimensions up to 4, answering a question of J. Bochi and the author within this range of dimensions. Finally, we pose an open problem on the Hausdorff dimension of self-affine measures which may be reduced to a statement concerning semigroups of matrices in which a particular weighted product of absolute eigenvalues is constant.
\end{abstract}

\maketitle

\section{Introduction and statement of results}

If $T_1,\ldots,T_d \colon \mathbb{R}^d \to \mathbb{R}^d$ are contractions then it is well known that there exists a unique nonempty compact set $X \subset \mathbb{R}^d$ such that $X=\bigcup_{i=1}^N T_iX$. In such a situation we call $(T_1,\ldots,T_N)$ an \emph{iterated function system} and the set $X$ its \emph{attractor}. When the transformations $T_i$ are all similitudes the set $X$ is called \emph{self-similar}, and in this case the dimension properties of the attractor have been well-understood since the 1981 work of J. E. Hutchinson \cite{Hu81}, at least in the case where the different images $T_iX$ do not too strongly overlap. In the case where the maps $T_i$ are merely affine, the dimension properties of the attractor are a topic of ongoing investigation (see for example \cite{Ba07,BaKaKo17,BaRa17,DaSi16,FaKe16,Fr12,MoSh16}). In this case an upper bound for the Hausdorff dimension was given by Falconer in 1988 (see \cite{Fa88}) and under mild additional conditions this was shown to give the exact value of the Hausdorff dimension in almost all cases in a precise sense; the focus of current research is to demonstrate that Falconer's formula for the Hausdorff dimension is valid for large \emph{explicit} families of affine iterated function systems.

In \cite{Ka04b}, A. K\"aenm\"aki introduced a class of measures on symbolic spaces which are expected to induce measures on the attractor with Hausdorff dimension equal to Falconer's bound. K\"aenm\"aki's measures have been investigated in \cite{BoMo17,FeKa11,KaMo16,KaVi10,Mo17a,Mo17b}, motivated by the ultimate goal of showing that they induce high-dimensional measures on the attractors of affine iterated function systems. In this note we shall present two results on K\"aenm\"aki measures, one addressing their Lyapunov exponents (in response to a question of B. B\'ar\'any, A. K\"aenm\"aki and H. Koivusalo) and one addressing the maximum number of distinct ergodic K\"aenm\"aki measures which a given iterated function system may have (in response to questions of A. K\"aenm\"aki, M. Vilppolainen, J. Bochi and the author). 

Let us now give the definition of a K\"aenm\"aki measure. Let $M_d(\mathbb{R})$ denote the vector space of all $d \times d$ real matrices, and for $A \in M_d(\mathbb{R})$ let $\alpha_1(A),\ldots,\alpha_d(A)$ denote the \emph{singular values} of $A$, which are defined to be the non-negative square roots of the positive semidefinite matrix $A^TA$ listed in decreasing order.
For each integer $d \geq 1$ and real number $s \geq 0$ we define the \emph{singular value function} $\varphi^s \colon M_d(\mathbb{R}) \to \mathbb{R}$, where $M_d(\mathbb{R})$ denotes the vector space of all $d \times d$ real matrices, by 
\[\varphi^s(A):=\left\{\begin{array}{cl}\alpha_1(A)\cdots \alpha_{\lfloor s \rfloor}(A) \alpha_{\lceil s \rceil}(A)^{s-\lfloor s \rfloor}&\text{if }0 \leq s \leq d,\\
|\det A|^{\frac{s}{d}}&\text{if }s \geq d.\end{array}\right.\]
For each $s \geq 0$ the singular value function satisfies $\varphi^s(AB)\leq \varphi^s(A)\varphi^s(B)$ for all $A,B \in M_d(\mathbb{R})$. If $T_1,\ldots,T_N \colon \mathbb{R}^d \to \mathbb{R}^d$ are affine contractions each having the form $T_ix=A_ix+v_i$ where $A_1,\ldots,A_N \in M_d(\mathbb{R})$ and $v_1,\ldots,v_d \in \mathbb{R}^d$, we observe that for each $s \geq 0$ the limit
\[P(\mathsf{A},\varphi^s):=\lim_{n \to \infty} \frac{1}{n}\log \sum_{i_1,\ldots,i_n=1}^N \varphi^s\left(A_{i_n}\cdots A_{i_1}\right)\]
exists by subadditivity. In this case we define the \emph{affinity dimension} of $\mathsf{A}:=(A_1,\ldots,A_N)$ to be the quantity
\[\dimaff (A_1,\ldots,A_N):=\inf\left\{s \geq 0 \colon P(\mathsf{A},\varphi^s)>0\right\}.\]
The singular value function and affinity dimension were introduced by K. Falconer in \cite{Fa88}, and their properties subsequently investigated in \cite{BoMo17,FaMi07,FaSl09,FeSh14,FrXX,KaVi10,Mo16}.

For each $N \geq 1$ let $\Sigma_N:=\{1,\ldots,N\}^{\mathbb{N}}$ which we equip with the infinite product topology. With respect to this topology $\Sigma_N$ is compact and metrisable. We let $\sigma \colon \Sigma_N \to \Sigma_N$ denote the shift transformation $\sigma\left[(x_n)_{n=1}^\infty\right]:=(x_{n+1})_{n=1}^\infty$, which is continuous. We let $\mathcal{M}_\sigma$ 
denote the set of all $\sigma$-invariant Borel probability measures on $\Sigma_N$. A measure $\nu \in \mathcal{M}_\sigma$ will be called a \emph{$\varphi^s$-equilibrium state of $\mathsf{A}=(A_1,\ldots,A_N)$} if it maximises the expression
\[h(\mu) +\lim_{n \to \infty} \frac{1}{n}\int \log \varphi^s(A_{i_n}\cdots A_{i_1})d\mu[(i_k)_{k=1}^\infty]\]
over all $\mu \in \mathcal{M}_\sigma$, where $h(\mu)$ denotes Kolmogorov-Sinai entropy. In the case where $s$ is equal to the affinity dimension of $\mathsf{A}$ a $\varphi^s$-equilibrium state of $\mathsf{A}$ is called a \emph{K\"aenm\"aki measure}.   

The first question which we investigate in this article is concerned with the number of ergodic $\varphi^s$-equilibrium states of an invertible matrix tuple $\mathsf{A}=(A_1,\ldots,A_N) \in GL_d(\mathbb{R})^N$.
Let us say that $\mathsf{A}=(A_1,\ldots,A_N) \in GL_d(\mathbb{R})^N$ is \emph{simultaneously triangularisable}, or simply \emph{triangularisable}, if there exists a basis for $\mathbb{R}^d$ with respect to which all of the matrices $A_i$ are upper triangular.
A. K\"aenm\"aki asked in \cite{Ka04b} whether for every $N,d \geq 2$, every $\mathsf{A} \in GL_d(\mathbb{R})^N$ and every $s \in (0,d)$ the $\varphi^s$-equilibrium state of $\mathsf{A}$ is unique. This question was answered negatively by A. K\"aenm\"aki and M. Vippolainen in \cite{KaVi10} where an example with two ergodic $\varphi^s$-equilibrium states was constructed; K\"aenm\"aki and Vippolainen then asked whether the number of ergodic $\varphi^s$-equilibrium states is always finite. This question was answered affirmatively in two dimensions by D.-J. Feng and A. K\"aenm\"aki \cite{FeKa11}, in three dimensions by K\"aenm\"aki and the present author in \cite{KaMo16}, and in arbitrary dimensions by J. Bochi and the present author in \cite{BoMo17}, where the number of ergodic $\varphi^s$-equilibrium states was shown to be bounded by a number depending only on $d$ and $s$. It was shown in \cite{KaMo16} that the number of ergodic $\varphi^s$-equilibrium states can be at least as high as $(d-\lfloor s\rfloor){d \choose \lfloor s \rfloor} = \lceil s \rceil {d \choose \lceil s \rceil}$ when $s$ is noninteger, and at least ${d \choose s}$ when $s$ is an integer; in both cases the examples constructed were simultaneously triangularisable. In the integer case this lower bound can be seen to be sharp using the results of Feng and K\"aenm\"aki \cite{FeKa11}. On the other hand in the non-integer case the best available upper bound for the number of ergodic equilibrium states is ${d\choose  \lfloor s \rfloor}{d \choose\lceil s\rceil}$, proved in \cite{BoMo17}. The gap between these upper and lower bounds led to the following question of Bochi and the present author (\cite[Question 2]{BoMo17}): if $\mathsf{A} \in GL_d(\mathbb{R})^N$ and $s \in (0,d)\setminus \mathbb{Z}$, and $\mathsf{A}$ has at least $(d-\lfloor s\rfloor){d \choose \lfloor s\rfloor}$ ergodic $\varphi^s$-equilibrium states, does it follow that $\mathsf{A}$ is upper triangularisable and hence has precisely $(d-\lfloor s\rfloor){d \choose \lfloor s\rfloor}$ ergodic $\varphi^s$-equilibrium states? In this note we answer this question positively for $d\leq 4$, with the exception of the case $(d,s)=(4,2)$ where we have a sharp bound for the number of ergodic $\varphi^s$-equilibrium states but do not prove triangularisability. We prove:
\begin{theorem}\label{th:bound}
Let $\mathsf{A}=(A_1,\ldots,A_N)\in GL_d(\mathbb{R})^N$ and $0<s<d\leq 4$. Then the maximum possible number of ergodic $\varphi^s$-equilibrium states of $\mathsf{A}$ is precisely ${d \choose s}$ if $s$ is an integer and $(d-\lfloor s \rfloor){d \choose \lfloor s \rfloor} = \lceil s\rceil {d \choose \lceil s \rceil}$ otherwise. Moreover, if this maximum is attained and $(d,s) \neq (4,2)$, then $\mathsf{A}$ is simultaneously triangularisable. 
\end{theorem}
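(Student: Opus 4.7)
The strategy is to build on the correspondence, established in \cite{BoMo17}, between ergodic $\varphi^s$-equilibrium states of $\mathsf{A} \in GL_d(\mathbb{R})^N$ and dominated invariant-subspace data of $\mathsf{A}$. For integer $s=k$, an equilibrium state corresponds to a single dominated $k$-dimensional invariant subspace of $\mathbb{R}^d$; for non-integer $s \in (k,k+1)$ it corresponds to a pair consisting of a dominated $k$-subspace and a dominated $(k+1)$-subspace. Since the number of dominated $j$-subspaces is bounded above by $\binom{d}{j}$, the sharp integer bound $\binom{d}{s}$ is immediate (as already known from \cite{FeKa11,KaMo16}), while in the non-integer case \cite{BoMo17} yields only the loose bound $\binom{d}{\lfloor s\rfloor}\binom{d}{\lceil s\rceil}$.

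First I would establish the key \emph{nestedness} property in the non-integer case: any pair $(W_k,W_{k+1})$ produced by an ergodic $\varphi^s$-equilibrium state satisfies $W_k \subset W_{k+1}$. The intuition is that at such an equilibrium measure the Lyapunov spectrum exhibits gaps both between the $k$-th and $(k+1)$-th and between the $(k+1)$-th and $(k+2)$-th positions, so the two dominated splittings must refine a common Oseledets filtration, forcing the containment. Granted nestedness, the total count of ergodic equilibrium states is bounded by the number of partial flags $(W_k,W_{k+1})$ of dominated subspaces: each of the at most $\binom{d}{k}$ dominated $k$-subspaces extends to a dominated $(k+1)$-subspace in at most $d-k$ ways, giving the bound $(d-k)\binom{d}{k}$.

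For the triangularisability clause I would analyse the dominated-subspace configurations when the maximum count is attained. In the non-integer case, attainment forces the simultaneous presence of all $\binom{d}{k}$ dominated $k$-subspaces together with all $\binom{d}{k+1}$ dominated $(k+1)$-subspaces, arranged in every nested pair; for $d \leq 4$ one verifies by direct enumeration that this combinatorial completeness can only occur when $\mathsf{A}$ admits a common complete invariant flag, hence is simultaneously upper triangularisable. In the integer case $s \in \{1,d-1\}$, attainment yields $d$ dominated lines (resp.\ hyperplanes), which forces a common basis of simultaneous eigenvectors and hence diagonalisability. The exceptional case $(d,s)=(4,2)$ is precisely the one where attainment produces six dominated $2$-subspaces with no accompanying information on $1$- or $3$-dimensional invariant data, and explicit examples should show that triangularisability may genuinely fail.

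The principal obstacle is the nestedness claim, which is the substantive input beyond \cite{BoMo17}: it requires careful Oseledets-theoretic analysis of the equilibrium measure and of how $\varphi^{k+t}$ couples the $k$-th and $(k+1)$-th exterior power cocycles. The remaining combinatorial arguments for $d \leq 4$ are tractable precisely because the binomial coefficients involved are small enough for the relevant configurations to be listed by hand.
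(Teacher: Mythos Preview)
There is a genuine gap at the foundation of your approach. The correspondence you attribute to \cite{BoMo17} --- ergodic $\varphi^s$-equilibrium states paired with ``dominated $k$- and $(k+1)$-dimensional invariant subspaces of $\mathbb{R}^d$'' --- is not what that paper establishes. The bound $\binom{d}{\lfloor s\rfloor}\binom{d}{\lceil s\rceil}$ in \cite{BoMo17} comes from composition series for the \emph{exterior-power representations} $\wedge^k\mathbb{R}^d$ and $\wedge^{k+1}\mathbb{R}^d$; the binomial coefficients arise as the dimensions of those spaces, not as counts of $k$-planes in $\mathbb{R}^d$. In particular, if $\mathsf{A}$ acts irreducibly on $\mathbb{R}^d$ there are no proper invariant subspaces of $\mathbb{R}^d$ whatsoever, yet $\mathsf{A}^{\wedge k}$ may still be reducible and in principle support several equilibrium states. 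Your nestedness claim is therefore not even well-posed in the correct framework: the objects produced by \cite{BoMo17} live in different exterior powers, between which no containment relation exists. Passing from invariant data in $\wedge^{k}\mathbb{R}^d$ and $\wedge^{k+1}\mathbb{R}^d$ back to flags in $\mathbb{R}^d$ is precisely the obstruction the paper singles out for $(d,s)=(4,2)$ --- and, incidentally, the paper does not assert that counterexamples exist there; it only says its methods yield triangularisability of $\mathsf{A}^{\wedge 2}$ and leave the question for $\mathsf{A}$ itself open.

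By contrast, the paper's proof is a direct case analysis with no Oseledets or nestedness input. It first reduces to $s\le d/2$ via the duality $A_i\mapsto |\det A_i|^{1/(d-s)}(A_i^{-1})^T$ (Lemma~\ref{le:5}). For each $d\le4$ it then trichotomises according to whether $\mathsf{A}$ is irreducible, triangularisable, or reducible-but-not-triangularisable. The irreducible case uses a bound from \cite{BoMo17} (Lemma~\ref{le:2}) which, for $d\le4$, is already strictly below the target; the triangularisable case is Lemma~\ref{le:1.5}. In the remaining case one passes to block-diagonal form (Lemma~\ref{le:4}), writes $\varphi^s(A_\iii)$ as a maximum of a few simpler submultiplicative potentials (Lemmas~\ref{le:max} and~\ref{le:3}), and bounds each piece separately by induction on $d$ or via Lemma~\ref{le:1}.
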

Theorem \ref{th:bound} is obtained as a consequence of the results of \cite{BoMo17} via a somewhat convoluted case-by-case analysis. Analogues of this argument in dimensions higher than 4 are complicated not only by the increasing number of sub-cases but also by a lack of sharp tools for treating those cases in which $\mathsf{A}$ is irreducible but some of its exterior powers are not. Indeed, it is precisely this issue which complicates our treatment of the case $d=4$, $s=2$: in that case our techniques lead easily to the conclusion that if ${4 \choose 2}$ ergodic $\varphi^2$-equilibrium states exist then $\mathsf{A}^{\wedge 2}$ is upper triangularisable, but to deduce from this that $\mathsf{A}$ is also upper triangularisable would require the application of nontrivial techniques from the theory of algebraic groups and Lie groups which we do not attempt to deploy here.

Let us now describe the second question which we address in this article.
Let $\mathsf{A}=(A_1,\ldots,A_N)\in GL_d(\mathbb{R})^N$ and let $\mu \in \mathcal{M}_\sigma$ be ergodic. The \emph{Lyapunov exponents} of $\mathsf{A}$ with respect to $\mu$ are defined to be the numbers
\[\Lambda_j (\mathsf{A},\mu)=\lim_{n \to \infty}  \frac{1}{n}\int \log \alpha_j(A_{i_n}\cdots A_{i_1})d\mu[(i_k)_{k=1}^\infty]\]
for $j=1,\ldots,d$, the existence of the limit being guaranteed by the subadditivity of the sequence
\begin{equation}\label{eq:etc}\int \log \left(\prod_{j=1}^\ell \alpha_j(A_{i_n}\cdots A_{i_1})\right)d\mu[(i_k)_{k=1}^\infty]\end{equation}
for each $\ell=1,\ldots,d$ (see below). 
In the article \cite{BaKaKo17}, B. B\'ar\'any, A. K\"aenm\"aki and H. Koivusalo asked the following question: if $\mu$ is a K\"aenm\"aki measure for the matrices $\mathsf{A}=(A_1,\ldots,A_N) \in GL_d(\mathbb{R})^N$, under what circumstances do the Lyapunov exponents of $\mathsf{A}$ with respect to $\mu$ take $d$ different values? This question was answered for K\"aenm\"aki measures of planar affine IFS by the author \cite[Theorem 13]{Mo17a}, but in higher dimensions the question seems more difficult to answer. In this note we give a partial answer by providing a simple checkable criterion for the separation of those Lyapunov exponents which are closest to the affinity dimension. We remark that for each $N,d \geq 2$ our criterion is satisfied for a dense, open, full-Lebesgue-measure set of tuples $(A_1,\ldots,A_N)\in GL_d(\mathbb{R})^N$.

If $1 \leq k \leq d$ we recall that the \emph{$k^{\mathrm{th}}$ exterior power} of $\mathbb{R}^d$ is 
the vector space spanned by formal expressions of the form $u_1\wedge \cdots \wedge u_k$ where $u_1,\ldots,u_k \in \mathbb{R}^d$, subject to the identifications 
\[(\lambda u_1)\wedge u_2 \wedge \cdots \wedge u_k= \lambda(u_1\wedge \cdots \wedge u_k),\]
 \[u_1 \wedge \cdots \wedge u_k = (-1)^{\mathrm{sign}(\varsigma)} u_{\varsigma(1)}\wedge \cdots \wedge u_{\varsigma(k)},\]
 \[(u_1 \wedge \cdots \wedge u_k) + (u_1' \wedge u_2 \wedge \cdots \wedge u_k) = (u_1+u_1')\wedge u_2 \cdots \wedge u_k\]
 where $\lambda \in \mathbb{R}$ and where $\varsigma \colon \{1,\ldots,k\} \to \{1,\ldots,k\}$ is any permutation. If an inner product $\langle \cdot,\cdot\rangle$ on $\mathbb{R}^d$ is understood, then
 \begin{equation}\label{eq:inducednorm}\langle u_1\wedge \cdots \wedge u_k,v_1\wedge \cdots \wedge v_k\rangle :=  \det [\langle u_i,v_j\rangle]_{i,j=1}^d\end{equation}
 extends by linearity to an inner product on $\wedge^k \mathbb{R}^d$. If $u_1,\ldots,u_d$ is a basis for $\mathbb{R}^d$ then the vectors $u_{i_1}\wedge \cdots \wedge u_{i_d}$ such that $1 \leq i_1 < i_2 < \cdots < i_k\leq d$ form a basis for $\wedge^k \mathbb{R}^d$, and in particular $\dim \wedge^k \mathbb{R}^d={d\choose k}$. If $A \colon \mathbb{R}^d \to \mathbb{R}^d$ is linear then we define $A^{\wedge k}$ to be the unique linear transformation of $\wedge^k \mathbb{R}^d$ such that $A^{\wedge k}(u_1\wedge \cdots \wedge u_k)=Au_1\wedge \cdots \wedge Au_k$ for all $u_1,\ldots,u_k \in \mathbb{R}^d$. We have $(A^{\wedge k})^T=(A^T)^{\wedge k}$ and $(AB)^{\wedge k}=A^{\wedge k}B^{\wedge k}$ for all linear endomorphisms $A,B$ of $\mathbb{R}^d$. If $A \colon \mathbb{R}^d \to \mathbb{R}^d$ is given and $e_1,\ldots,e_d$ is a basis for $\mathbb{R}^d$ given by (generalised) eigenvectors of $A$ then it is straightforward to check that vectors of the form $e_{i_1}\wedge \cdots \wedge e_{i_k}$ with $1 \leq i_1<\cdots <i_k\leq d$ are a basis for $\wedge^k\mathbb{R}^d$ given by (generalised) eigenvectors of $A^{\wedge k}$. It follows from these considerations that $\left\|A^{\wedge k}\|=\|(A^{\wedge k})^TA^{\wedge k}\right\|^{1/2}=\prod_{i=1}^k \alpha_i(A)$ for any linear endomorphism $A$ of $\mathbb{R}^d$ and any $1 \leq k \leq d$, where $\|\cdot\|$ denotes the Euclidean norm implied by the  inner product \eqref{eq:inducednorm}. This in particular implies the inequality $\prod_{i=1}^k \alpha_i(AB) \leq \left(\prod_{i=1}^k \alpha_i(A) \right)\left(\prod_{i=1}^k \alpha_i(B) \right)$ for all $A,B \in M_d(\mathbb{R})$ which guarantees the existence of the limit \eqref{eq:etc}. If $\mathsf{A}=(A_1,\ldots,A_N) \in GL_d(\mathbb{R})^N$ and $1 \leq k \leq d$ then we write $\mathsf{A}^{\wedge k}=(A_1^{\wedge k},\ldots,A_N^{\wedge k})$. We note the identity $\varphi^s(A)=\left\|A^{\wedge \lfloor s\rfloor}\right\|^{1+\lfloor s\rfloor -s} \left\|A^{\wedge \lceil s\rceil}\right\|^{s-\lfloor s \rfloor}$ which will be used extensively in this article.

We shall say that $\mathsf{A}=(A_1,\ldots,A_N) \in GL_d(\mathbb{R})^N$ is \emph{irreducible} if there is no proper nonzero subspace $V$ of $\mathbb{R}^d$ such that $A_iV \subseteq V$ for all $i=1,\ldots,N$, and we shall say that $\mathsf{A}$ is \emph{strongly irreducible} if there is no finite union $W=\bigcup_{j=1}^m V_j$ of proper nonzero subspaces $V \subset \mathbb{R}^d$ such that $A_iW \subseteq W$ for all $i=1,\ldots,N$. We will say that $\mathsf{A}$ is \emph{$k$-irreducible} (respectively \emph{$k$-strongly irreducible}) if $\mathsf{A}^{\wedge k}$ is irreducible (respectively strongly $k$-irreducible) in the same sense. For the purposes of this article we shall also say that $\mathsf{A}=(A_1,\ldots,A_N)$ is \emph{$k$-proximal} if there exist $i_1,\ldots,i_n \in \{1,\ldots,N\}$ such that the $k^{\mathrm{th}}$-largest and $(k+1)^{\mathrm{st}}$-largest eigenvalues of $A_{i_n}\cdots A_{i_1}$ have distinct absolute values. (This definition of $k$-proximality coincides with more standard notions of $k$-proximality if $\mathsf{A}$ is strongly $k$-irreducible -- see e.g. \cite[\S2]{GoGu96} -- but we shall find this terminology to be convenient for arbitrary $\mathsf{A}$.) We note that $\mathsf{A}$ is $k$-proximal if and only if $\mathsf{A}^{\wedge k}$ is $1$-proximal. 

In this note we prove the following theorem on the separation of Lyapunov exponents: 
\begin{theorem}\label{th:sep}
Let $\mathsf{A}=(A_1,\ldots,A_N) \in GL_d(\mathbb{R})^N$ and $0<s<d$. Then the following properties hold:
\begin{enumerate}[(i)]
\item
Suppose that $0 \leq k <s\leq k+1<d$ and that $\mathsf{A}$ is $\ell$-irreducible for $\ell=k$, $\ell=k+1$ and $\ell=k+2$. Suppose also that $\mathsf{A}$ is $(k+1)$-proximal. If additionally $\mathsf{A}$ is strongly $\ell$-irreducible either for $\ell=k$, or for both $\ell=k+1$ and $\ell=k+2$, then $\mathsf{A}$ has a unique $\varphi^s$-equilibrium state $\mu$, and $\Lambda_{k+1}(\mathsf{A},\mu)>\Lambda_{k+2}(\mathsf{A},\mu)$.
\item
Suppose that $0 < k \leq s<k+1\leq d$ and that $\mathsf{A}$ is $\ell$-irreducible for $\ell=k-1$, $\ell=k$ and $\ell=k+1$. Suppose also that $\mathsf{A}$ is $k$-proximal. If additionally $\mathsf{A}$ is strongly $\ell$-irreducible either for $\ell=k+1$, or for both $\ell=k-1$ and $\ell=k$, then $\mathsf{A}$ has a unique $\varphi^s$-equilibrium state $\mu$, and $\Lambda_{k}(\mathsf{A},\mu)>\Lambda_{k+1}(\mathsf{A},\mu)$.
\end{enumerate}
\end{theorem}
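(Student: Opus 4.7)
The plan is to reduce both parts of the theorem to a question about the gap between the top two Lyapunov exponents of a single exterior power, and then to invoke Furstenberg--Guivarc'h type simplicity criteria. I focus on part (i); part (ii) follows by the symmetric argument after shifting indices by one. Using the identity
\[
\varphi^s(A)\;=\;\|A^{\wedge \lfloor s\rfloor}\|^{\lfloor s\rfloor+1-s}\|A^{\wedge \lceil s\rceil}\|^{s-\lfloor s\rfloor}
\]
recorded in the introduction, the variational expression defining a $\varphi^s$-equilibrium state becomes, for $s\in[k,k+1]$,
\[
h(\mu)+(k+1-s)\Lambda_{1}(\mathsf{A}^{\wedge k},\mu)+(s-k)\Lambda_{1}(\mathsf{A}^{\wedge(k+1)},\mu),
\]
where $\Lambda_{1}(\mathsf{A}^{\wedge \ell},\mu)$ denotes the top Lyapunov exponent of $\mathsf{A}^{\wedge \ell}$. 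The key observation is that $\Lambda_{k+1}(\mathsf{A},\mu)-\Lambda_{k+2}(\mathsf{A},\mu)$ equals $\Lambda_{1}(\mathsf{A}^{\wedge(k+1)},\mu)-\Lambda_{2}(\mathsf{A}^{\wedge(k+1)},\mu)$, so the desired separation is equivalent to simplicity of the leading Lyapunov exponent of $\mathsf{A}^{\wedge(k+1)}$ with respect to $\mu$.

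For uniqueness of the $\varphi^s$-equilibrium state I would invoke the uniqueness machinery of \cite{BoMo17}. That work bounds the number of ergodic $\varphi^s$-equilibrium states via block decompositions of $\mathsf{A}^{\wedge k}$ and $\mathsf{A}^{\wedge(k+1)}$, and the hypothesis that $\mathsf{A}$ is $\ell$-irreducible for each of $\ell=k,\,k+1,\,k+2$ guarantees that the relevant decompositions are all trivial, so that a single ergodic state remains. The $(k+2)$-irreducibility is present to control the Oseledets filtration immediately below the gap being produced.

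For the Lyapunov separation itself I would treat the two sub-cases of the hypothesis separately. In the sub-case where $\mathsf{A}$ is strongly $(k+1)$- and strongly $(k+2)$-irreducible, the tuple $\mathsf{A}^{\wedge(k+1)}$ is strongly irreducible, and $(k+1)$-proximality of $\mathsf{A}$ is exactly $1$-proximality of $\mathsf{A}^{\wedge(k+1)}$, so classical Furstenberg--Guivarc'h simplicity applies to $\mathsf{A}^{\wedge(k+1)}$ and yields $\Lambda_{1}(\mathsf{A}^{\wedge(k+1)},\mu)>\Lambda_{2}(\mathsf{A}^{\wedge(k+1)},\mu)$. In the alternative sub-case of strong $k$-irreducibility, I would exploit the Gibbs-type structure of $\mu$ and lift it to a measure on the flag space of pairs $V_{k}\subset V_{k+1}\subset\R^{d}$: strong $k$-irreducibility forces the $V_{k}$-marginal to be non-atomic and sufficiently supported, and the $(k+1)$-proximal element then provides a contraction on the quotient $V_{k+1}/V_{k}$ that is incompatible with coincidence of the two top Lyapunov exponents of $\mathsf{A}^{\wedge(k+1)}$.

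The main obstacle is this alternative sub-case: transferring strong irreducibility at the $\wedge^{k}$ level into a simplicity statement at the $\wedge^{k+1}$ level, where the Furstenberg--Guivarc'h hypotheses are not directly available at the level on which the gap is sought. The argument must exploit the Gibbs nature of $\mu$ inherited from the potential $\log\varphi^{s}$ in order to propagate strong $k$-irreducibility upward to a non-degeneracy property of $\mu$ on the Grassmannian of $(k+1)$-planes strong enough to rule out a repeated top Lyapunov exponent for $\mathsf{A}^{\wedge(k+1)}$; verifying this propagation is the delicate point of the argument.
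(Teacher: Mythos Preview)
Your approach diverges substantially from the paper's, and in its present form it has a genuine gap. The classical Furstenberg--Guivarc'h simplicity criterion applies to i.i.d.\ matrix products, i.e.\ to Bernoulli measures on $\Sigma_N$. The $\varphi^s$-equilibrium state $\mu$ is in general \emph{not} Bernoulli (indeed, Proposition~\ref{pr:onoob} shows this fails except in very degenerate situations), so you cannot simply invoke Furstenberg--Guivarc'h for $\mathsf{A}^{\wedge(k+1)}$ with respect to $\mu$ even in your ``easy'' sub-case. One would need a simplicity theorem for cocycles over a shift equipped with a Gibbs measure whose potential depends on the cocycle itself; such results exist in various forms but require real work to set up, and you have not indicated which one you mean or why its hypotheses are met. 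Your second sub-case is, as you acknowledge, not a proof at all: the flag-space lifting and ``propagation'' of strong $k$-irreducibility to a non-degeneracy statement at level $k+1$ is a programme, not an argument, and it is unclear how $(k+2)$-irreducibility would enter.

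The paper's route is completely different and avoids Furstenberg-type machinery entirely. One introduces a second submultiplicative potential
\[
\Phi^2(\iii)\;=\;\bigl\|A_\iii^{\wedge k}\bigr\|^{\,1+\frac{k-s}{2}}\bigl\|A_\iii^{\wedge(k+2)}\bigr\|^{\frac{s-k}{2}},
\]
which satisfies $\Phi^2\le\Phi^1:=\varphi^s(A_\iii)$ pointwise because $\alpha_{k+1}\ge\alpha_{k+2}$. The irreducibility hypotheses are chosen exactly so that both $\Phi^1$ and $\Phi^2$ are quasimultiplicative via Lemma~\ref{le:2parts}; this is where $(k+2)$-irreducibility is actually used, and why the alternative ``strong at $\ell=k$'' versus ``strong at $\ell=k+1$ and $\ell=k+2$'' arises. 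If $\Lambda_{k+1}=\Lambda_{k+2}$ then $\Lambda(\Phi^1,\mu)=\Lambda(\Phi^2,\mu)$, whence (Lemma~\ref{le:mma}, using the Gibbs property from Proposition~\ref{pr:fe}) $\Phi^1$ and $\Phi^2$ are uniformly comparable on $\Sigma_N^*$. This forces $\alpha_{k+1}(A_\iii)/\alpha_{k+2}(A_\iii)$ to be uniformly bounded over all words $\iii$, and Yamamoto's theorem then gives $\lambda_{k+1}(A_\iii)=\lambda_{k+2}(A_\iii)$ for every $\iii$, contradicting $(k+1)$-proximality. The argument is short, uses only the Gibbs structure, and treats both sub-cases of the hypothesis uniformly.
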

If $s$ is an integer then the irreducibility conditions may be very slightly weakened: see Remark 1 below. Our criterion is unfortunately insufficient to fully answer Barany, K\"aenm\"aki  and Koivusalo's question even in three dimensions, since for example we are not able to exclude the possibility that $\Lambda_1(\mathsf{A},\mu)=\Lambda_2(\mathsf{A},\mu)$ when $\mu$ is a $\varphi^s$-equilibrium state of $\mathsf{A} \in GL_3(\mathbb{R})^N$  and $2<s<3$, or $\Lambda_2(\mathsf{A},\mu)=\Lambda_3(\mathsf{A},\mu)$ when $\mu$ is a $\varphi^s$-equilibrium state of $\mathsf{A} \in GL_3(\mathbb{R})^N$ with $0<s<1$. In a sense these two cases are equivalent: see Remark 2 below.

The remainder of this article is structured as follows. In \S2 we present some general results and notations which will be applied in proving both of our main theorems. In \S3 and \S4 we present the proofs of Theorems \ref{th:bound} and \ref{th:sep} respectively, and in \S5 we examine the question of when a K\"aenm\"aki measure can be a Bernoulli measure and the implications for the Hausdorff dimension of self-affine measures.
\section{General preliminaries}

Let $N \geq 2$. We say that a \emph{word} over $\{1,\ldots,N\}$ is any finite sequence $\iii=(i_k)_{k=1}^n$ and let $\Sigma_N^*$ denote the set of all words over $\{1,\ldots,N\}$. If $\iii=(i_k)_{k=1}^n$ we say that $n$ is the \emph{length} of $\iii$ and define $|\iii|:=n$. If $\iii$ and $\jjj$ are elements of $\Sigma_N^*$ we define their concatenation $\iii\jjj$ to be the word of length $|\iii|+|\jjj|$ obtained by running first through the symbols of $\iii$ and then through the symbols of $\jjj$ in the obvious manner. If $A_1,\ldots,A_N \in M_d(\mathbb{R})$ and $\iii =(i_k)_{k=1}^n \in \Sigma_N^*$ we define $A_\iii:=A_{i_n}\cdots A_{i_1}$. We note that $A_\iii A_\jjj=A_{\jjj\iii}$ for all $\iii,\jjj \in \Sigma_N^*$. For all $x=(x_k)_{k=1}^\infty \in \Sigma_N$ we let $x|_n:=(x_k)_{k=1}^n \in \Sigma_N^*$. 

For the purposes of this article we shall say that a \emph{potential} is any function $\Phi \colon \Sigma_N^* \to (0,+\infty)$. We will say that a potential is \emph{submultiplicative} if $\Phi(\iii\jjj)\leq \Phi(\iii)\Phi(\jjj)$ for every $\iii,\jjj \in \Sigma_N^*$ and \emph{quasimultiplicative} if there exist a finite set $F \subset \Sigma_N^*$ and a real number $\delta>0$ such that $\max_{\kkk \in F}\Phi(\iii\kkk\jjj) \geq \delta \Phi(\iii)\Phi(\jjj)$ for every $\iii,\jjj \in \Sigma_N^*$. If $\Phi$ is a submultiplicative potential we define a sequence of functions $\Phi_n \colon \Sigma_N \to \mathbb{R}$ by $\Phi_n(x):=\Phi(x|_n)$, and observe that $\Phi_{n+m}(x)\leq \Phi_n(\sigma^mx)\Phi_m(x)$ for all $x \in \Sigma_N$ and $n,m\geq 1$. We define the \emph{asymptotic average} of a submultiplicative potential $\Phi$ with respect to an ergodic measure $\mu \in \mathcal{M}_\sigma$ to be the quantity
\[\Lambda(\Phi,\mu):=\lim_{n \to \infty}\frac{1}{n}\int \log \Phi_n d\mu =\inf_{n \geq1}\frac{1}{n}\int \log \Phi_n d\mu,\]
where we note that the existence of the limit follows by subadditivity. We define the \emph{pressure} of a submultiplicative potential $\Phi$ to be the quantity
\[P(\Phi):=\lim_{n \to \infty} \frac{1}{n}\log \sum_{|\iii|=n} \Phi(\iii)=\inf_{n \geq 1} \frac{1}{n}\log \sum_{|\iii|=n} \Phi(\iii)\]
which again is well-defined by subadditivity.  By the subadditive variational principle (see \cite{CaFeHu08}) we have
\begin{equation}\label{eq:savp}P(\Phi)=\sup_{\mu \in \mathcal{M}_\sigma} h(\mu)+\Lambda(\Phi,\mu)\end{equation}
and this supremum is always attained since $\mathcal{M}_\sigma$ is weak-* compact and $\mu \mapsto h(\mu)+\Lambda(\Phi,\mu)$ is upper semi-continuous.  We say that $\mu \in \mathcal{M}_\sigma$ is an \emph{equilibrium state} for a submultiplicative potential $\Phi$ if $P(\Phi)=h(\mu) + \Lambda(\Phi,\mu)$. If $\mathsf{A}=(A_1,\ldots,A_N)\in GL_d(\mathbb{R})^N$ is given, we will say (as in the introduction) that $\mu$ is a $\varphi^s$-equilibrium state of $\mathsf{A}$ if it is an equilibrium state of the potential $\Phi(\iii):=\varphi^s(A_\iii)$, and also that $\mu$ is a $\|\cdot\|^t$-equilibrium state of $\mathsf{A}$ if it is an equilibrium state of the potential $\Phi'(\iii):=\left\|A_\iii\right\|^t$, where $t>0$.

Our interest in subadditive potentials as a general class is motivated by the following special case of a theorem of D.-J. Feng (\cite[Theorem 5.5]{Fe11}):
\begin{proposition}[\cite{Fe11}]\label{pr:fe}
Let $N \geq 2$ and let $\Phi \colon \Sigma_N^* \to (0,+\infty)$ be a submultiplicative and quasimultiplicative potential. Then there exists a unique equilibrium state $\mu$ for $\Phi$, and moreover there exists $C>0$ depending only on $\Phi$ such that
\[C^{-1}e^{-|\iii|P(\Phi)} \Phi(\iii) \leq \mu([\iii]) \leq Ce^{-|\iii|P(\Phi)} \Phi(\iii) \]
for every $\iii \in \Sigma_N$.
\end{proposition}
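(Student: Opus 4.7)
The plan is to construct the equilibrium state as a Gibbs-type measure built from the partition sums $Z_n:=\sum_{|\iii|=n}\Phi(\iii)$, and then to deduce uniqueness from the two-sided Gibbs bound itself. First, submultiplicativity makes $\log Z_n$ subadditive, hence $Z_n\geq e^{nP(\Phi)}$ and $\tfrac{1}{n}\log Z_n\to P(\Phi)$. Quasimultiplicativity yields the matching upper bound: summing the inequality $\max_{\kkk\in F}\Phi(\iii\kkk\jjj)\geq\delta\Phi(\iii)\Phi(\jjj)$ over all $\iii,\jjj$ with $|\iii|=n$ and $|\jjj|=m$ gives $\delta Z_n Z_m\leq\sum_{\kkk\in F}Z_{n+m+|\kkk|}$, and combining with submultiplicativity produces a uniform bound
\[e^{nP(\Phi)}\leq Z_n\leq C_0 e^{nP(\Phi)}\]
for some $C_0>0$ depending only on $\Phi$.

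Second, I would define for each $n\geq 1$ the Borel probability $\tilde\mu_n$ assigning mass $\Phi(\iii)/Z_n$ to each level-$n$ cylinder $[\iii]$, and take a weak-$*$ accumulation point $\mu$ of the Cesaro averages $\nu_n:=\tfrac{1}{n}\sum_{j=0}^{n-1}\sigma_*^j\tilde\mu_n$, so that $\mu\in\mathcal{M}_\sigma$. For a word $\iii$ of length $m$ and any $n\geq m$, submultiplicativity combined with the previous bound gives
\[\tilde\mu_n([\iii])=\sum_{|\jjj|=n-m}\frac{\Phi(\iii\jjj)}{Z_n}\leq\frac{\Phi(\iii)Z_{n-m}}{Z_n}\leq C_0 e^{-mP(\Phi)}\Phi(\iii),\]
and after Cesaro averaging and passage to the weak-$*$ limit this yields the upper Gibbs bound $\mu([\iii])\leq C e^{-mP(\Phi)}\Phi(\iii)$. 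For the matching lower bound I would use quasimultiplicativity to route each word $\jjj$ of length $\approx n-m$ through the optimal bridging word $\kkk\in F$, which produces mass $\geq\delta\Phi(\jjj)\Phi(\iii)/Z_n$ inside $\sigma^{-(|\jjj|+|\kkk|)}[\iii]$; summing over $\jjj$ forces $\mu([\iii])\geq C^{-1}e^{-mP(\Phi)}\Phi(\iii)$.

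Third, to see that $\mu$ is an equilibrium state I would combine Shannon--McMillan--Breiman with Kingman's subadditive ergodic theorem: for $\mu$-a.e.\ $x$ the quantities $-\tfrac{1}{n}\log\mu([x|_n])$ and $\tfrac{1}{n}\log\Phi(x|_n)$ converge to $h(\mu)$ and $\Lambda(\Phi,\mu)$ respectively, and the two-sided Gibbs bound forces $h(\mu)+\Lambda(\Phi,\mu)=P(\Phi)$, so that $\mu$ attains the supremum in \eqref{eq:savp}. If $\mu$ is not \emph{a priori} ergodic one first applies the ergodic decomposition and carries out this step on almost every component.

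For uniqueness, given any other equilibrium state $\mu'$, one shows that each ergodic component of $\mu'$ is itself an ergodic equilibrium state and hence satisfies a Gibbs lower bound recovered by a reverse Shannon--McMillan--Breiman argument. Combining this with the Gibbs upper bound for $\mu$ forces the two measures to be mutually absolutely continuous with bounded Radon--Nikodym derivatives, while quasimultiplicativity provides enough mixing on cylinders to guarantee that $\mu$ is ergodic, so the derivative must be constant and $\mu'=\mu$. I expect this last step -- recovering a pointwise Gibbs \emph{lower} bound from the mere equilibrium property, and upgrading mutual absolute continuity to equality -- to be the main technical hurdle, since the subadditive setting forbids any direct use of the Ruelle--Bowen transfer operator machinery.
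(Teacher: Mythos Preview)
The paper does not prove this proposition at all: it is simply quoted as a special case of \cite[Theorem~5.5]{Fe11} and used as a black box. Your sketch is essentially the standard construction that Feng carries out, and the first four steps (the two-sided partition-sum estimate $Z_n\asymp e^{nP(\Phi)}$, the weak-$*$ construction of $\mu$, the Gibbs bounds, and the verification that $\mu$ is an equilibrium state directly from the Gibbs inequality) are correct in outline, although the lower Gibbs bound does require some bookkeeping with the variable bridge lengths that you have suppressed.

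The uniqueness step, however, contains a genuine gap. You cannot recover a \emph{pointwise} Gibbs lower bound $\mu'([\iii])\geq c\,e^{-|\iii|P(\Phi)}\Phi(\iii)$ for an arbitrary ergodic equilibrium state $\mu'$ from Shannon--McMillan--Breiman and Kingman: those theorems give only the asymptotic statement $\tfrac{1}{n}\log\mu'([x|_n])-\tfrac{1}{n}\log\Phi(x|_n)\to -P(\Phi)$ for $\mu'$-a.e.\ $x$, which is subexponential rather than uniformly bounded and says nothing about cylinders off a $\mu'$-null set. The correct argument bypasses any Gibbs bound for $\mu'$ entirely. Using only the Gibbs \emph{lower} bound for $\mu$ together with $H_n(\mu')\geq nh(\mu')$ and $\int\log\Phi_n\,d\mu'\geq n\Lambda(\Phi,\mu')$, one obtains the uniform relative-entropy estimate
\[\sum_{|\iii|=n}\mu'([\iii])\log\frac{\mu'([\iii])}{\mu([\iii])}\;\leq\;\log C\qquad\text{for all }n\geq 1.\]
This bounds the $L\log L$-norm of the Radon--Nikodym martingale $f_n:=d\mu'|_{\mathcal F_n}/d\mu|_{\mathcal F_n}$ uniformly in $n$; by de la Vall\'ee Poussin the martingale is uniformly integrable, hence converges in $L^1(\mu)$, and therefore $\mu'\ll\mu$. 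Ergodicity of $\mu$ (which must be established separately from quasimultiplicativity, e.g.\ by showing $\mu$ is mixing on cylinders) then forces the density to be constant and $\mu'=\mu$.
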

The following property of $\varphi^s$-equilibrium states will be useful in both of the following two sections:
\begin{lemma}\label{le:5}
Let $\mathsf{A}=(A_1,\ldots,A_N) \in GL_d(\mathbb{R})^N$ and $0<s<d$. Then a measure $\mu$ is a $\varphi^s$-equilibrium state of $\mathsf{A}$ if and only if it is a $\varphi^{d-s}$-equilibrium state of $\mathsf{A}'=(A_1',\ldots,A_N')$, where for $i=1,\ldots,N$ we define
\[A_i':=|\det A_i|^{\frac{1}{d-s}} \left(A_i^{-1}\right)^T.\]
\end{lemma}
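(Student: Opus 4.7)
The plan is to show that the two submultiplicative potentials $\iii \mapsto \varphi^s(A_\iii)$ and $\iii \mapsto \varphi^{d-s}(A_\iii')$ coincide pointwise on $\Sigma_N^*$. Since the equilibrium states of a submultiplicative potential depend only on the potential, this equality immediately yields the lemma via the variational principle \eqref{eq:savp}.

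To establish the pointwise equality of the potentials, I would first reduce to the single-matrix statement
\[\varphi^{d-s}\bigl(|\det A|^{\frac{1}{d-s}}(A^{-1})^T\bigr) = \varphi^s(A)\]
for every $A \in GL_d(\mathbb{R})$. Indeed, using $(BC)^T = C^TB^T$, $(BC)^{-1} = C^{-1}B^{-1}$, and the multiplicativity of the determinant, a short computation shows that for any $\iii = (i_1,\ldots,i_n) \in \Sigma_N^*$ the matrix $A_\iii'$ formed from the tuple $\mathsf{A}'$ satisfies
\[A_\iii' = A_{i_n}' \cdots A_{i_1}' = |\det A_\iii|^{\frac{1}{d-s}} (A_\iii^{-1})^T,\]
so that the single-matrix identity applied to $A_\iii$ gives $\varphi^{d-s}(A_\iii') = \varphi^s(A_\iii)$ for every word $\iii$.

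For the single-matrix identity itself, I would use the standard fact that the singular values of $(A^{-1})^T$ are the reciprocals of those of $A$ in reverse order, i.e.\ $\alpha_j((A^{-1})^T) = \alpha_{d-j+1}(A)^{-1}$, which follows from the singular value decomposition; scaling gives $\alpha_j(A') = |\det A|^{1/(d-s)}\alpha_{d-j+1}(A)^{-1}$. Then I would split into two cases. When $s$ is an integer, $d-s$ is also an integer, and
\[\varphi^{d-s}(A') = \prod_{j=1}^{d-s} \alpha_j(A') = |\det A| \cdot \prod_{j=s+1}^{d} \alpha_j(A)^{-1} = \prod_{j=1}^{s} \alpha_j(A) = \varphi^s(A)\]
after using $|\det A| = \prod_{j=1}^d \alpha_j(A)$. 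When $s$ is non-integer, with $k = \lfloor s\rfloor$, one has $\lfloor d-s\rfloor = d-k-1$ and $\lceil d-s\rceil = d-k$, and the accumulated exponents on $|\det A|^{1/(d-s)}$ (namely $(d-k-1) + (k+1-s) = d-s$) cancel the factor $|\det A|^{-1}$ coming from $\prod_{j=k+2}^d \alpha_j(A)^{-1}$, leaving exactly $\alpha_1(A)\cdots\alpha_k(A)\,\alpha_{k+1}(A)^{s-k} = \varphi^s(A)$.

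No step here poses a serious obstacle; the verification is essentially bookkeeping. The only place that requires some care is aligning $\lfloor d-s\rfloor$ and $\lceil d-s\rceil$ with $\lfloor s\rfloor$ and $\lceil s\rceil$ in the non-integer case, and making sure that the identity $A_\iii' = |\det A_\iii|^{1/(d-s)}(A_\iii^{-1})^T$ is written with the correct ordering of indices so that it is a \emph{single-matrix} application of the transpose-inverse construction to $A_\iii$ itself.
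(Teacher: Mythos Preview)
Your proposal is correct and follows essentially the same route as the paper: both reduce the lemma to the single-matrix identity $\varphi^{d-s}(|\det A|^{1/(d-s)}(A^{-1})^T)=\varphi^s(A)$ via the observation that $A\mapsto |\det A|^{1/(d-s)}(A^{-1})^T$ is a homomorphism, and then verify that identity by the singular-value relation $\alpha_j((A^{-1})^T)=\alpha_{d-j+1}(A)^{-1}$. The only cosmetic difference is that the paper handles all $s$ at once by writing $k\le s\le k+1$, whereas you split into the integer and non-integer cases; both computations are the same bookkeeping.
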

\begin{proof}
Define $\phi \colon GL_d(\mathbb{R}) \to GL_d(\mathbb{R})$ by $\phi(A):=|\det A|^{1/(d-s)} (A^{-1})^T$ and suppose that $k\leq s \leq k+1$ where $k$ is an integer. Then $\phi$ is a homomorphism and
\begin{align*}\varphi^{d-s}(\phi(A))&=|\det A| \varphi^{d-s}\left(A^{-1}\right)\\
&= |\det A| \alpha_1(A^{-1})\cdots \alpha_{d-k-1}(A^{-1}) \alpha_{d-k}(A^{-1})^{d-s-(d-k-1)}\\
&= |\det A| \alpha_d(A)^{-1}\cdots \alpha_{k+2}(A)^{-1} \alpha_{k+1}(A)^{s-k-1}\\
&= \alpha_1(A)\cdots \alpha_{k}(A) \alpha_{k+1}(A)^{s-k} =\varphi^s(A).\end{align*}
We deduce that the potentials $\Phi(\iii):=\varphi^s(A_\iii)$ and $\Phi'(\iii):=\varphi^{d-s}(A_\iii')$ satisfy
\[\Phi'(\iii)=\varphi^{d-s}(A_\iii')=\varphi^{d-s}(\phi(A_\iii))=\varphi^s(A_\iii)=\Phi(\iii)\]
for every $\iii \in \Sigma_N^*$, where the second equality exploits the fact that $\phi$ is a homomorphism. The result follows.
\end{proof}


\section{Proof of Theorem \ref{th:bound}}

The proof of Theorem \ref{th:bound} operates by appeal to a long series of lemmas. The following result may be easily deduced from the work of Feng and K\"aenm\"aki \cite{FeKa11} and is also a special case of \cite[Theorem 5]{BoMo17}:
\begin{lemma}\label{le:1}
Let $\mathsf{B}=(B_1,\ldots,B_N) \in GL_d(\mathbb{R})^N$ and $s>0$, and define a potential $\Phi \colon \Sigma_N^* \to \mathbb{R}$ by $\Phi(\iii):=\|B_\iii\|^s$. Then $\Phi$ has at most $d$ ergodic equilibrium states, and if exactly $d$ ergodic equilibrium states exist then $\mathsf{B}$ is simultaneously triangularisable. If on the other hand $\mathsf{B}$ is irreducible, then $\Phi$ is quasi-multiplicative and there is a unique equilibrium state for $\Phi$.
\end{lemma}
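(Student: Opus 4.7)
The plan is to split into two regimes: the irreducible case, handled directly, and the general case, which reduces to the irreducible one via a block-triangular decomposition coming from an irreducible filtration of $\mathbb{R}^d$.

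First I would treat the irreducible case and prove that $\Phi$ is quasi-multiplicative. The standard argument uses that the semigroup generated by $\{B_1,\ldots,B_N\}$ acts irreducibly on $\mathbb{R}^d$, so no finite union of proper subspaces is invariant. This lets one select a finite set of words $\kkk_1,\ldots,\kkk_m$ such that, whatever the top left singular direction $u$ of $B_\jjj$ and the top right singular direction $v$ of $B_\iii$ may be, at least one $B_{\kkk_t}$ maps $v$ into a direction not orthogonal to $u$. Quantifying this via a compactness argument on projective space yields $\delta>0$ with $\max_t \|B_\jjj B_{\kkk_t} B_\iii\| \geq \delta \|B_\iii\|\|B_\jjj\|$ for all $\iii,\jjj$. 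Raising to the $s$-th power gives quasi-multiplicativity of $\Phi$, after which Proposition \ref{pr:fe} delivers the unique equilibrium state.

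For the general case, I would fix a basis of $\mathbb{R}^d$ in which each $B_i$ is block upper-triangular, with diagonal blocks $B_i^{(1)},\ldots,B_i^{(r)}$ arising from a Jordan--H\"older filtration whose successive quotients carry irreducible actions of $\mathsf{B}$. Since the dimensions $d_j$ of these quotients are positive and sum to $d$, we have $r\leq d$. A crude norm comparison shows that $\|B_\iii\|$ and $\max_j \|B^{(j)}_\iii\|$ agree up to factors that are polynomial in $|\iii|$, which is enough to give
\[P(\Phi)=\max_{1\leq j\leq r} P(\Phi^{(j)}),\qquad \Phi^{(j)}(\iii):=\|B_\iii^{(j)}\|^s,\]
and, by applying Kingman's subadditive ergodic theorem to $\log\|B_\iii\|$ and to each $\log\|B_\iii^{(j)}\|$, that $\Lambda(\Phi,\mu)=\max_j \Lambda(\Phi^{(j)},\mu)$ for every ergodic $\mu$. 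Any ergodic equilibrium state $\mu$ of $\Phi$ therefore achieves the maximum for some $\Phi^{(j_0)}$ with $P(\Phi^{(j_0)})=P(\Phi)$, and by the irreducible case such an equilibrium state is uniquely determined by $j_0$. Hence $\Phi$ has at most $r\leq d$ ergodic equilibrium states. If exactly $d$ of them exist then $r=d$, forcing every $d_j=1$; a filtration by one-dimensional invariant subspaces is precisely a simultaneous triangularisation of $\mathsf{B}$.

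The main technical obstacle is the quasi-multiplicativity step in the irreducible case, which carries the real geometric content and is the source of the appeal to \cite{FeKa11}. Once that is established, the reduction via the irreducible filtration, the counting of ergodic equilibrium states, and the deduction of simultaneous triangularisability when $r=d$ are essentially bookkeeping around the subadditive variational principle \eqref{eq:savp}.
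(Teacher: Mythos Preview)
Your plan is sound and, since the paper itself cites \cite{FeKa11} and \cite[Theorem 5]{BoMo17} in lieu of a proof, your sketch is doing honest work. The irreducible half is correct (your aside about ``no finite union of proper subspaces'' describes \emph{strong} irreducibility, but the compactness argument you actually run needs only ordinary irreducibility, which is what is assumed). The gap is in the reducible half: the ``crude norm comparison'' you assert, that $\|B_\iii\|$ and $\max_j\|B^{(j)}_\iii\|$ differ by at most a factor polynomial in $|\iii|$, is false. With $d=N=2$,
\[B_1=\begin{pmatrix}2&0\\0&1\end{pmatrix},\qquad B_2=\begin{pmatrix}1&1\\0&2\end{pmatrix},\]
one computes $B_1^{n}B_2^{n}=\left(\begin{smallmatrix}2^n & 2^n(2^n-1)\\0&2^n\end{smallmatrix}\right)$, so $\|B_1^{n}B_2^{n}\|\asymp 4^n$ while both one-dimensional diagonal products equal $2^n$; the ratio is exponential in the word length $2n$. (The polynomial bound you have in mind is valid for powers of a \emph{single} block-triangular matrix, not for mixed products.)

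The conclusions you want --- $\Lambda(\Phi,\mu)=\max_j\Lambda(\Phi^{(j)},\mu)$ for ergodic $\mu$, and thence $P(\Phi)=\max_j P(\Phi^{(j)})$ via \eqref{eq:savp} --- are nevertheless true, but they require a genuine argument: this is the standard fact that the top Lyapunov exponent of a block-upper-triangular linear cocycle over an ergodic base equals the maximum of the top Lyapunov exponents of its diagonal blocks (provable via Oseledets, or by induction on the number of blocks). An alternative repair, closer to this paper, is to first invoke the $\|\cdot\|^s$-analogue of Lemma~\ref{le:4} to delete the off-diagonal blocks without changing the equilibrium states; for the resulting block-\emph{diagonal} tuple one has $\|B_\iii'\|=\max_j\|B^{(j)}_\iii\|$ up to a uniform constant, and Lemma~\ref{le:max} then yields your conclusion directly. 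With either fix the remainder of your argument --- counting $r\leq d$, and $r=d$ forcing all $d_j=1$ and hence simultaneous triangularisability --- goes through as written.
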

The following result was proved in \cite{KaMo16}:
\begin{lemma}[{\cite[Theorem 5]{KaMo16}}]\label{le:4}
Let $\mathsf{A}=(A_1,\ldots,A_N) \in GL_d(\mathbb{R})^N$ and $0<s<d$, and suppose that there exist $\ell \in \{1,\ldots,d-1\}$ and a basis for $\mathbb{R}^d$ in which we may write
\[A_i=\begin{pmatrix}B_i & C_i \\0&D_i\end{pmatrix}\]
where each $B_i$ has dimension $\ell \times \ell$ and each $D_i$ has dimension $(d-\ell)\times (d-\ell)$. Then the $\varphi^s$-equilibrium states of $\mathsf{A}$ are identical to the $\varphi^s$-equilibrium states of the tuple $\mathsf{A}' =(A_1',\ldots,A_N') \in GL_d(\mathbb{R})^N$ defined by
\[A_i':=\begin{pmatrix}B_i &0 \\0&D_i\end{pmatrix}.\]
\end{lemma}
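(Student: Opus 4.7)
The plan is to interpolate between $\mathsf{A}$ and $\mathsf{A}'$ via a one-parameter family of conjugations of $\mathsf{A}$ and then compare the limiting potential with the original using an exterior-algebra bound.

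For each $t \in (0,1]$ I would introduce the diagonal change of basis $T_t := \operatorname{diag}(t^{1/2} I_\ell, t^{-1/2} I_{d-\ell})$ and set $A_i(t) := T_t A_i T_t^{-1}$. A direct computation gives
\[A_i(t) = \begin{pmatrix} B_i & t C_i \\ 0 & D_i \end{pmatrix},\]
so $A_i(t) \to A_i'$ as $t \to 0^+$ while $A_i(1) = A_i$. Since conjugation is multiplicative, $A_\iii(t) = T_t A_\iii T_t^{-1}$ for every word $\iii$, and submultiplicativity of $\varphi^s$ sandwiches $\varphi^s(A_\iii(t))$ between $\varphi^s(T_t^{-1})^{-1}\varphi^s(T_t)^{-1}\varphi^s(A_\iii)$ and $\varphi^s(T_t)\varphi^s(T_t^{-1})\varphi^s(A_\iii)$ with constants independent of $\iii$. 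Hence the submultiplicative potentials $\Phi(\iii) := \varphi^s(A_\iii)$ and $\Phi^t(\iii) := \varphi^s(A_\iii(t))$ share the same pressure, the same asymptotic average against every $\mu \in \mathcal{M}_\sigma$, and in particular the same equilibrium states, for every $t>0$.

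Next I would prove the pointwise inequality $\varphi^s(A_\iii) \geq \varphi^s(A_\iii')$. Writing $U := \R^\ell \oplus \{0\}$ and $W := \{0\} \oplus \R^{d-\ell}$, the orthogonal decomposition $\wedge^k \R^d = \bigoplus_{a+b=k}(\wedge^a U) \otimes (\wedge^b W)$ is preserved by $(A_\iii')^{\wedge k}$, which acts on the summand indexed by $(a,b)$ as $B_\iii^{\wedge a} \otimes D_\iii^{\wedge b}$; and it refines the filtration $F_{a_0} := \bigoplus_{a \geq a_0}(\wedge^a U) \otimes (\wedge^{k-a} W)$, which is preserved by $A_\iii^{\wedge k}$ because $A_\iii U \subseteq U$ can only increase the number of $U$-factors when one expands $A_\iii^{\wedge k}(u_1 \wedge \cdots \wedge u_{a_0} \wedge w_1 \wedge \cdots \wedge w_{k-a_0})$. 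A short computation shows that the orthogonal projection of $A_\iii^{\wedge k} v$ onto $(\wedge^{a_0} U) \otimes (\wedge^{k-a_0} W)$ is exactly $(B_\iii^{\wedge a_0} \otimes D_\iii^{\wedge (k-a_0)}) v$ for $v$ in that summand, so orthogonality yields $\|A_\iii^{\wedge k} v\| \geq \|(B_\iii^{\wedge a_0} \otimes D_\iii^{\wedge (k-a_0)}) v\|$. Taking suprema over $v$ and over $a_0$ gives $\|A_\iii^{\wedge k}\| \geq \max_{a_0}\|B_\iii^{\wedge a_0}\|\|D_\iii^{\wedge (k-a_0)}\| = \|(A_\iii')^{\wedge k}\|$, the last equality holding by the block-diagonal structure of $(A_\iii')^{\wedge k}$. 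The identity $\varphi^s(A) = \|A^{\wedge \lfloor s\rfloor}\|^{1+\lfloor s\rfloor - s}\|A^{\wedge\lceil s\rceil}\|^{s-\lfloor s\rfloor}$ then upgrades this to $\Phi(\iii) \geq \Phi'(\iii)$.

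To finish, from $\Phi \geq \Phi'$ I immediately obtain $\Lambda(\Phi,\mu) \geq \Lambda(\Phi',\mu)$ and $P(\Phi) \geq P(\Phi')$, while submultiplicativity of $\Phi^t$ together with the previous paragraph gives, for every $n\geq 1$ and $t>0$,
\[\Lambda(\Phi,\mu) = \Lambda(\Phi^t,\mu) \leq \frac{1}{n}\int \log \Phi_n^t \, d\mu, \qquad P(\Phi) = P(\Phi^t) \leq \frac{1}{n}\log\sum_{|\iii|=n}\Phi^t(\iii).\]
Since $\Phi_n^t$ is locally constant on cylinders of length $n$ and takes only finitely many values, it converges uniformly to $\Phi_n'$ as $t \to 0^+$, so both sides pass to the limit; taking the infimum over $n$ then yields $\Lambda(\Phi,\mu) \leq \Lambda(\Phi',\mu)$ and $P(\Phi) \leq P(\Phi')$. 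Equality in both, combined with the characterisation that $\mu$ is an equilibrium state of a submultiplicative potential $\Psi$ iff $h(\mu) + \Lambda(\Psi,\mu) = P(\Psi)$, shows that $\Phi$ and $\Phi'$ have identical equilibrium states, as required. The main subtlety I expect is the exterior-algebra step identifying the diagonal piece of $A_\iii^{\wedge k}$ and exploiting orthogonality of the decomposition; once that is in hand, everything else reduces to standard manipulations with subadditive potentials.
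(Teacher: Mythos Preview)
Your argument is correct. The paper does not supply a proof of this lemma but simply quotes it from \cite{KaMo16}, so there is nothing to compare against here; your interpolation-by-conjugation plus the exterior-algebra inequality $\|A_\iii^{\wedge k}\|\geq\|(A_\iii')^{\wedge k}\|$ gives a clean, self-contained derivation. The only places worth a second look are the two you flag yourself: that the diagonal block of $A_\iii^{\wedge k}$ on $(\wedge^{a_0}U)\otimes(\wedge^{k-a_0}W)$ really is $B_\iii^{\wedge a_0}\otimes D_\iii^{\wedge(k-a_0)}$ (it is, because $A_\iii u\in U$ for $u\in U$ while $A_\iii w-D_\iii w\in U$ for $w\in W$, so the only contribution with exactly $a_0$ $U$-factors picks out the $D_\iii$-part of each $w$), and that the limit $t\to0^+$ may be taken inside the finite sums before passing to the infimum over $n$. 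Both are routine once stated.
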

Versions of the following principle are appealed to in a number of works such as \cite{BoMo17,FeKa11,KaMo16}:
\begin{lemma}\label{le:max}
Let $N \geq 2$ and let $\Phi \colon \Sigma_N^* \to (0,+\infty)$ be a submultiplicative potential. Suppose that there exist submultiplicative potentials $\Phi^1,\ldots,\Phi^m \colon \Sigma_N^* \to (0,+\infty)$ and a constant $C>0$ such that $C^{-1}\Phi(\iii)=\max_{1 \leq j \leq m}\Phi^j(\iii) \leq C\Phi(\iii)$ for every $\iii \in \Sigma_N^*$. If $\mu$ is an ergodic equilibrium state of $\Phi$, then it is an ergodic equilibrium state of at least one of the potentials $\Phi^j$.
\end{lemma}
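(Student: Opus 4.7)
The key tool is the subadditive variational principle \eqref{eq:savp}: since $h(\mu)+\Lambda(\Phi^j,\mu) \leq P(\Phi^j)$ for every $j$, to prove that $\mu$ is an equilibrium state of some $\Phi^j$ it suffices to exhibit an index $j^*$ for which $h(\mu)+\Lambda(\Phi^{j^*},\mu) \geq P(\Phi^{j^*})$. I would produce such an index by establishing the two identities
\[
P(\Phi)=\max_{1 \leq j \leq m}P(\Phi^j)\qquad\text{and}\qquad \Lambda(\Phi,\mu)=\max_{1 \leq j \leq m}\Lambda(\Phi^j,\mu),
\]
then taking $j^*$ to achieve the maximum in the second and chaining with the hypothesis $h(\mu)+\Lambda(\Phi,\mu)=P(\Phi)$.

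The pressure identity is the easy part. From $\Phi^j \leq C^2\Phi$ one reads off $P(\Phi^j) \leq P(\Phi)$; conversely, summing $\Phi(\iii) \leq C\sum_{k=1}^m \Phi^k(\iii)$ over words of length $n$ gives $\sum_{|\iii|=n}\Phi(\iii) \leq Cm\max_j \sum_{|\iii|=n}\Phi^j(\iii)$, after which taking $\tfrac{1}{n}\log$ and letting $n\to\infty$ yields $P(\Phi) \leq \max_j P(\Phi^j)$.

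The Lyapunov identity is more delicate and is where ergodicity enters; I expect it to be the main obstacle. One direction, $\Lambda(\Phi^j,\mu) \leq \Lambda(\Phi,\mu)$, is immediate from $\Phi^j \leq C^2\Phi$. For the reverse, the plan is to apply Kingman's subadditive ergodic theorem simultaneously to the submultiplicative sequences $(\log\Phi_n)$ and $(\log\Phi^j_n)$, exploiting ergodicity of $\mu$ to obtain a single set of full $\mu$-measure on which $\tfrac{1}{n}\log\Phi_n \to \Lambda(\Phi,\mu)$ and $\tfrac{1}{n}\log\Phi^j_n \to \Lambda(\Phi^j,\mu)$ for all $j$ simultaneously. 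Evaluating $\tfrac{1}{n}\log\Phi_n(x) - \tfrac{\log C}{n} \leq \max_j \tfrac{1}{n}\log\Phi^j_n(x)$ at any such $x$ and letting $n\to\infty$ (finiteness of $\{1,\ldots,m\}$ allows $\max$ to commute with $\lim$) gives $\Lambda(\Phi,\mu) \leq \max_j \Lambda(\Phi^j,\mu)$. With both identities in hand, choosing $j^*$ attaining the maximum in the Lyapunov identity, the chain
\[
P(\Phi^{j^*}) \geq h(\mu)+\Lambda(\Phi^{j^*},\mu)=h(\mu)+\Lambda(\Phi,\mu)=P(\Phi)=\max_{j}P(\Phi^j) \geq P(\Phi^{j^*})
\]
must collapse to equalities throughout, so $\mu$ is an equilibrium state of $\Phi^{j^*}$, and it remains ergodic by hypothesis. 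Working only with integrals of $\log\Phi^j_n$ rather than a.e.\ limits would be awkward here, since the bound $\int \max_j \log\Phi^j_n\,d\mu \geq \max_j \int\log\Phi^j_n\,d\mu$ runs the wrong way for controlling $\Lambda(\Phi,\mu)$ from above by $\max_j\Lambda(\Phi^j,\mu)$; this is why Kingman's pointwise theorem, and the ergodicity hypothesis that makes its limit deterministic, are genuinely needed.
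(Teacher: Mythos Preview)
Your proposal is correct and follows essentially the same route as the paper: both arguments hinge on applying the subadditive ergodic theorem (Kingman) together with ergodicity of $\mu$ to obtain $\Lambda(\Phi,\mu)=\max_{1\leq j\leq m}\Lambda(\Phi^j,\mu)$, then close via the subadditive variational principle. The only difference is cosmetic: the paper uses only the inequality $P(\Phi^j)\leq P(\Phi)$ and writes the chain as $P(\Phi)=h(\mu)+\Lambda(\Phi,\mu)=h(\mu)+\Lambda(\Phi^{j^*},\mu)\leq P(\Phi^{j^*})\leq P(\Phi)$, so your proof of the reverse pressure inequality $P(\Phi)\leq\max_j P(\Phi^j)$ is not actually needed.
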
 
\begin{proof}
Clearly we have $P(\Phi) \geq P(\Phi^j)$ for each $j=1,\ldots,m$ by direct appeal to the definition of the pressure $P$. If $\mu$ is an ergodic equilibrium state for $\Phi$ then by the subadditive ergodic theorem we have for $\mu$-a.e. $x \in \Sigma_N$
\[\Lambda(\Phi,\mu)=\lim_{n \to \infty} \frac{1}{n}\log \Phi_n(x) = \lim_{n \to \infty}  \frac{1}{n}\log \max_{1 \leq j \leq m}\Phi^j_n(x) = \max_{1 \leq j \leq m} \lim_{n \to \infty}\frac{1}{n}\log \Phi^j_n(x) = \max_{1 \leq j \leq m}\Lambda(\Phi^j,\mu).\]
Let $j \in \{1,\ldots,m\}$ such that $\Lambda(\Phi,\mu)=\Lambda(\Phi^j,\mu)$. We have
\[P(\Phi)=h(\mu)+\Lambda(\Phi,\mu)=h(\mu)+\Lambda(\Phi^j,\mu)\leq P(\Phi^j)\leq P(\Phi)\]
by the subadditive variational principle and therefore $P(\Phi^j)=h(\mu)+\Lambda(\Phi^j,\mu)$ so that  $\mu$ is an equilibrium state of $\Phi^j$ as required.
\end{proof}
The following result is obtained from \cite[Theorem 5]{BoMo17} by taking $k=2$ and $n_1=1$:
\begin{lemma}\label{le:2}
Let $\mathsf{B}=(B_1,\ldots,B_N) \in GL_{d_1}(\mathbb{R})^N$ and $\mathsf{C}=(C_1,\ldots,C_N)\in GL_{d_2}(\mathbb{R})^N$ and define a potential $\Phi$ by $\Phi(\iii)=\|B_\iii\|^\beta \|C_\iii\|^\gamma$ for suitable real constants $\beta,\gamma>0$. Suppose that $\mathsf{B}$ is irreducible. Then the number of ergodic equilibrium states of $\Phi$ is not greater than $d_2$.
\end{lemma}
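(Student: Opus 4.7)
The lemma is stated as a direct consequence of \cite[Theorem 5]{BoMo17} with $k=2$ and $n_1=1$, so my plan at the most economical level is simply to verify that the hypotheses of that more general theorem are satisfied and then cite. Since the relevant general theorem treats products of finitely many matrix-norm potentials and bounds the number of ergodic equilibrium states by the product of the dimensions of the factors that are not known to be irreducible, the choice $k=2$, $n_1=1$ with the irreducible factor $\|B_\iii\|^\beta$ and the unrestricted factor $\|C_\iii\|^\gamma$ immediately yields the bound $d_2$.

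To motivate why this bound is natural, I would sketch a self-contained argument in the spirit of the proof of \cite[Theorem 5]{BoMo17}. First, fix unit vectors $v_1,\ldots,v_{d_2}$ forming a basis of $\mathbb{R}^{d_2}$. By equivalence of norms there is a constant $K>0$ such that $K^{-1}\|C\|\leq \max_j\|Cv_j\|\leq \|C\|$ for every $d_2\times d_2$ matrix $C$. Defining $\Phi^j(\iii):=\|B_\iii\|^\beta\|C_\iii v_j\|^\gamma$, this yields $K^{-\gamma}\Phi(\iii)\leq \max_j\Phi^j(\iii)\leq \Phi(\iii)$ for every $\iii\in\Sigma_N^*$. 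The potentials $\Phi^j$ are not literally submultiplicative but do satisfy the one-sided inequality $\Phi^j(\iii\jjj)\leq\Phi^j(\iii)\Phi(\jjj)$, which I expect is enough to mimic the Kingman-style steps used in the proof of Lemma \ref{le:max}.

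Next, for any ergodic $\mu\in\mathcal{M}_\sigma$ the averages $n^{-1}\log\|B_{x|_n}\|$ and $n^{-1}\log\|C_{x|_n}v_j\|$ converge for $\mu$-almost every $x$ (by Furstenberg--Kesten and Oseledets respectively), so $\Lambda^j(\mu):=\lim_n n^{-1}\log\Phi^j(x|_n)$ exists $\mu$-a.e.\ for each $j$, and the uniform comparison between $\Phi$ and $\max_j\Phi^j$ forces $\Lambda(\Phi,\mu)=\max_j\Lambda^j(\mu)$. If $\mu$ is an ergodic equilibrium state of $\Phi$, assigning to $\mu$ any maximizing index $j(\mu)\in\{1,\ldots,d_2\}$ then produces a map from the set of ergodic equilibrium states of $\Phi$ into $\{1,\ldots,d_2\}$, and the bound is reduced to showing that this map is injective.

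The hard part will be the injectivity of $\mu\mapsto j(\mu)$. Here the irreducibility of $\mathsf{B}$ becomes essential: by Lemma \ref{le:1} the potential $\|B_\iii\|^\beta$ is quasi-multiplicative, and I expect that any ergodic equilibrium state of type $j$ must behave like a Gibbs measure along the $\mathsf{B}$-direction for the potential $\Phi^j$, so two ergodic equilibrium states sharing the same type $j$ would produce two distinct Gibbs measures for a common quasi-multiplicative potential, contradicting the uniqueness clause of Proposition \ref{pr:fe}. Making this rigorous demands a careful adaptation of the Gibbs/quasi-multiplicative machinery to the one-sided submultiplicativity of the $\Phi^j$, since $\|C_\iii v_j\|^\gamma$ is not itself submultiplicative in the strict sense; this adaptation is presumably the main technical content of \cite[Theorem 5]{BoMo17}.
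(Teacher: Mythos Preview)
Your proposal is correct and matches the paper exactly: the paper gives no proof at all beyond the sentence ``The following result is obtained from \cite[Theorem 5]{BoMo17} by taking $k=2$ and $n_1=1$,'' which is precisely your primary plan. Your additional motivational sketch goes beyond what the paper provides, and you rightly flag that the injectivity step (and the non-submultiplicativity of $\|C_\iii v_j\|$) is where the real work lies and is deferred to \cite{BoMo17}.
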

The following result recalls some arguments from \cite[\S7]{KaMo16}:
\begin{lemma}\label{le:3}
Suppose that $\mathsf{A}=(A_1,\ldots,A_N)\in GL_d(\mathbb{R})^N$ may be written as
\[X^{-1}A_iX = \begin{pmatrix}b_i&0\\0&C_i\end{pmatrix}\]
for each $i=1,\ldots,N$, where each $b_i$ is real, each $C_i$ is a $(d-1)\times (d-1)$ matrix, and $X \in GL_d(\mathbb{R})$. Let $s \in (1,d-1)$. Then every ergodic $\phi^s$-equilibrium state is either a $\varphi^s$-equilibrium state of $(C_1,\ldots,C_N)$, or a $\varphi^{s-1}$-equilibrium state of $(|b_1|^{1/(s-1)}C_1,\ldots,|b_N|^{1/(s-1)}C_N)$, or a $\|\cdot\|$-equilibrium state of $(|b_1|^{s-\lfloor s\rfloor}C_1^{\wedge \lfloor s \rfloor},\ldots,|b_N|^{s-\lfloor s \rfloor}C_N^{\wedge \lfloor s \rfloor})$. 
\end{lemma}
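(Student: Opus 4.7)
The plan is to rewrite $\varphi^s(A_\iii)$ as the pointwise maximum of three submultiplicative potentials, each corresponding to one of the three types of equilibrium states in the statement, and then invoke Lemma \ref{le:max}. Since $\varphi^s(X^{-1}BX)$ differs from $\varphi^s(B)$ by a multiplicative factor bounded above and below independently of $B$, the $\varphi^s$-equilibrium states of $\mathsf{A}$ coincide with those of its conjugate by $X$, so I may assume from the outset that $A_i = b_i \oplus C_i$ is block-diagonal. Then $A_\iii = b_\iii \oplus C_\iii$ where $b_\iii := b_{i_n}\cdots b_{i_1}$, and the identity $A_\iii^T A_\iii = b_\iii^2 \oplus C_\iii^T C_\iii$ shows that the singular values of $A_\iii$ are exactly $|b_\iii|$ together with $\alpha_1(C_\iii), \ldots, \alpha_{d-1}(C_\iii)$, suitably reordered.

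Write $k := \lfloor s \rfloor$ and $t := s-k \in [0,1)$; the hypothesis $s \in (1, d-1)$ ensures $1 \leq k \leq d-2$, so that $\alpha_k(C_\iii)$ and $\alpha_{k+1}(C_\iii)$ are both well defined. A case analysis on the position of $|b_\iii|$ within the reordered list of singular values of $A_\iii$ yields the exact identity
\[\varphi^s(A_\iii) = \max\left\{\varphi^s(C_\iii),\; |b_\iii|\,\varphi^{s-1}(C_\iii),\; |b_\iii|^t \|C_\iii^{\wedge k}\|\right\},\]
where the three quantities realise the maximum respectively when $|b_\iii|\leq \alpha_{k+1}(C_\iii)$, when $|b_\iii| \geq \alpha_k(C_\iii)$, and when $\alpha_{k+1}(C_\iii) \leq |b_\iii| \leq \alpha_k(C_\iii)$. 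Each of the three terms on the right is submultiplicative on $\Sigma_N^*$: indeed, by homogeneity of the singular value function one has $|b_\iii|\,\varphi^{s-1}(C_\iii) = \varphi^{s-1}(C'_\iii)$ where $C'_i := |b_i|^{1/(s-1)} C_i$, and $|b_\iii|^t \|C_\iii^{\wedge k}\| = \|D_\iii\|$ where $D_i := |b_i|^{s-\lfloor s\rfloor} C_i^{\wedge \lfloor s\rfloor}$.

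With these identifications in hand, an application of Lemma \ref{le:max} to $\Phi(\iii) := \varphi^s(A_\iii)$ and the three potentials above shows that every ergodic $\varphi^s$-equilibrium state of $\mathsf{A}$ is an equilibrium state of one of $\iii \mapsto \varphi^s(C_\iii)$, $\iii \mapsto \varphi^{s-1}(C'_\iii)$, or $\iii \mapsto \|D_\iii\|$, which is precisely the conclusion of the lemma. The only substantive step is the case analysis establishing the displayed identity, and this amounts to comparing products of singular values of $C_\iii$ with the appropriate power of $|b_\iii|$ in each of the three intervals $[0,\alpha_{k+1}(C_\iii)]$, $[\alpha_{k+1}(C_\iii),\alpha_k(C_\iii)]$, $[\alpha_k(C_\iii),\infty)$; in each interval the labelled expression dominates the other two by a direct check using the monotonicity $\alpha_{k+1}(C_\iii) \leq \alpha_k(C_\iii)$ and the fact that $t \in [0,1)$.
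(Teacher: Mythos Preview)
Your proof is correct and follows essentially the same route as the paper's: conjugate to block-diagonal form, identify the singular values of $A_\iii$ as $|b_\iii|$ together with those of $C_\iii$, perform the three-case analysis on the position of $|b_\iii|$ to obtain the exact maximum formula, and then invoke Lemma~\ref{le:max}. Your ordering in the intermediate case $\alpha_{k+1}(C_\iii)\leq |b_\iii|\leq \alpha_k(C_\iii)$ is in fact the correct one (the paper's stated inequality there is reversed by a typo).
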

\begin{proof}
Let $\hat{A}_i:=X^{-1}AX$ for each $i=1,\ldots,N$. We have
\[ \|X^{-1}\|^{-1}\|X\|^{-1}\alpha_k(A_i) \leq \alpha_k(\hat{A}_i) \leq \|X^{-1}\|\cdot\|X\|\alpha_k(A_i)\]
for every $k=1,\ldots,d$ and $i=1,\ldots,N$, and it follows that $C^{-1}\varphi^s(A_\iii)\leq \varphi^s(\hat{A}_\iii) \leq C\varphi^s(A_\iii)$ for every $\iii \in \Sigma_N^*$ for some constant $C>0$ depending only on $X$. In particular $(A_1,\ldots,A_N)$ and $(\hat{A}_1,\ldots,\hat{A}_N)$ have the same $\varphi^s$-equilibrium states. 

The singular values of $\hat{A}_\iii$ are precisely $|b_\iii|$, $\alpha_1(C_\iii), \ldots, \alpha_d(C_\iii)$ in some order, with $\alpha_k(C_\iii)$ preceding $\alpha_{k+1}(C_\iii)$ for each $k=1,\ldots,d-1$. Let $\ell:=\lfloor s \rfloor$. We have for each $\iii \in \Sigma_N^*$
\begin{align*}\varphi^s(\hat{A}_\iii)&=\alpha_1(\hat{A}_\iii)\cdots \alpha_\ell(\hat{A}_\iii) \alpha_{\ell+1}(\hat{A}_\iii)^{s-\ell}\\
&=\max\left\{ \begin{array}{c}\alpha_1(C_\iii)\cdots \alpha_{\ell}(C_\iii)\alpha_{\ell+1}(C_\iii)^{s-\ell},\\
 |b_\iii| \alpha_1(C_\iii)\cdots \alpha_{\ell-1}(C_\iii)\alpha_\ell(C_\iii)^{s-\ell},\\
  \alpha_1(C_\iii)\cdots \alpha_{\ell-1}(C_\iii)\alpha_\ell(C_\iii)|b_\iii|^{s-\ell}\end{array}\right\}\\
&=\max\left\{\begin{array}{c}\varphi^s(C_\iii),\\\varphi^{s-1}(|b_\iii|^{1/(s-1)}C_\iii),\\\||b_i|^{s-\ell}C_\iii^{\wedge \ell}\|\end{array} \right\}\end{align*}
where the maximum is equal to the first term if $\alpha_{\ell+1}(C_\iii)\geq |b_\iii|$, the second if $\alpha_\ell(C_\iii)\leq |b_\iii|$, and the third if $\alpha_{\ell}(C_\iii)\leq |b_\iii| \leq \alpha_{\ell+1}(C_\iii)$.
Define $\Phi(\iii):=\varphi^s(A_\iii)$, $\Phi^1(\iii):=\varphi^s(C_\iii)$, $\Phi^2(\iii):=\varphi^{s-1}(|b_\iii|^{1/(s-1)} C_\iii)$, and $\Phi^3(\iii):=\||b_\iii|^{s-\ell} C_\iii^{\wedge \ell}\|$ for all $\iii \in \Sigma_N^*$. We have
\[C^{-1}\Phi(\iii) \leq \max_{1 \leq j \leq 3}\Phi^j(\iii) \leq  C\Phi(\iii)\]
for all $\iii \in \Sigma_N^*$ where $C>0$ is a suitable constant, and hence by Lemma \ref{le:max} every ergodic equilibrium state of $\Phi$ must be an ergodic equilibrium state of one of the three potentials $\Phi^j$. Thus every $\phi^s$-equilibrium state is either a $\varphi^s$-equilibrium state of $(C_1,\ldots,C_N)$, or a $\varphi^{s-1}$-equilibrium state of $(|b_1|C_1,\ldots,|b_N|C_N)$, or a $\|\cdot\|$-equilibrium state of $(|b_1|^{s-\lfloor s\rfloor}C_1^{\wedge \lfloor s \rfloor},\ldots,|b_N|^{s-\lfloor s \rfloor}C_N^{\wedge \lfloor s\rfloor})$ as claimed.
\end{proof}
The following result is a corollary of several results from \cite{KaMo16}:
\begin{lemma}\label{le:1.5}
Let $\mathsf{B}=(B_1,\ldots,B_N) \in GL_d(\mathbb{R})^N$ and $s \in (0,d) \setminus \mathbb{Z}$, and suppose that $\mathsf{B}$ is simultaneously triangularisable. Then there are at most $(d-\lfloor s\rfloor){d \choose \lfloor s \rfloor}$ ergodic $\varphi^s$-equilibrium states for $\mathsf{B}$.
\end{lemma}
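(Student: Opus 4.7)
The plan is to combine Lemma \ref{le:4} with Lemma \ref{le:max} after reducing to a tuple of simultaneously diagonal matrices, and then to observe that the singular-value function of a diagonal matrix is the maximum of only $(d-\lfloor s\rfloor)\binom{d}{\lfloor s\rfloor}$ exactly multiplicative potentials, each of which admits a unique equilibrium state.

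First I would invoke the hypothesis of simultaneous triangularisability to fix a basis in which every $B_i$ is upper triangular. Applying Lemma \ref{le:4} with $\ell=1$ zeroes out the first row of off-diagonal entries while preserving the set of $\varphi^s$-equilibrium states; the resulting tuple is still upper triangular in the same basis and carries a block-diagonal structure with blocks of sizes $1$ and $d-1$. Iterating this observation with $\ell=2,3,\ldots,d-1$ (the hypotheses of Lemma \ref{le:4} continue to be met at each stage because the strictly upper-triangular sub-blocks above the already-separated diagonal pieces are exactly the required $C_i$), we eventually produce a tuple of the form $\mathsf{B}'=(X\operatorname{diag}(b_{i,1},\ldots,b_{i,d})X^{-1})_{i=1}^N$ with the same $\varphi^s$-equilibrium states as $\mathsf{B}$. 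Since $X$ distorts singular values only by a bounded multiplicative factor, the ratio $\varphi^s(B_\iii')/\varphi^s(\operatorname{diag}(b_{\iii,1},\ldots,b_{\iii,d}))$ is uniformly bounded above and below, so the set of equilibrium states is unchanged if we replace $\mathsf{B}'$ by its diagonal conjugate. Hence we may assume outright that each $B_i$ is diagonal with diagonal entries $b_{i,1},\ldots,b_{i,d}$.

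Next I would expand $\varphi^s(B_\iii)$ in closed form. Writing $k:=\lfloor s\rfloor$ and $b_{\iii,l}:=\prod_{r=1}^{|\iii|}b_{i_r,l}$, the singular values of $B_\iii$ are precisely the numbers $|b_{\iii,1}|,\ldots,|b_{\iii,d}|$ re-sorted in decreasing order. A rearrangement argument then yields
\[\varphi^s(B_\iii)=\max_{\substack{S\subset\{1,\ldots,d\},\,|S|=k\\ j\in\{1,\ldots,d\}\setminus S}}\Phi^{(S,j)}(\iii),\qquad \Phi^{(S,j)}(\iii):=\Bigl(\prod_{l\in S}|b_{\iii,l}|\Bigr)|b_{\iii,j}|^{s-k},\]
because the right-hand side is optimised by placing the $k$ unit weights on the top $k$ values and the fractional weight $s-k\in(0,1)$ on the $(k+1)$-st. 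Each of the $(d-k)\binom{d}{k}$ potentials $\Phi^{(S,j)}$ is exactly multiplicative, determined by the locally constant function $\psi^{(S,j)}(i):=\sum_{l\in S}\log|b_{i,l}|+(s-k)\log|b_{i,j}|$, so standard thermodynamic formalism for Bernoulli shifts supplies a unique equilibrium state for each $\Phi^{(S,j)}$ (the Bernoulli measure with weights proportional to $e^{\psi^{(S,j)}(i)}$).

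Finally I would apply Lemma \ref{le:max} to $\Phi(\iii):=\varphi^s(B_\iii)$ together with the finite family $\{\Phi^{(S,j)}\}$: every ergodic $\varphi^s$-equilibrium state of $\mathsf{B}$ must be the unique equilibrium state of at least one $\Phi^{(S,j)}$, yielding at most $(d-k)\binom{d}{k}$ ergodic equilibrium states in total. The only point that needs genuine verification is the rearrangement identity for $\varphi^s$, which is entirely elementary once we are in the diagonal case; the two reductions (to diagonal form via Lemma \ref{le:4} and from a finite max of potentials to uniqueness via Lemma \ref{le:max}) are bookkeeping on top of the tools already assembled.
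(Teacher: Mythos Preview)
Your proposal is correct and follows essentially the same route as the paper: reduce to the upper-triangular (and then diagonal) case by iterated use of Lemma~\ref{le:4}, after which the $\varphi^s$-potential of a diagonal tuple decomposes as a maximum of $(d-\lfloor s\rfloor)\binom{d}{\lfloor s\rfloor}$ multiplicative potentials and Lemma~\ref{le:max} finishes. The only difference is cosmetic: the paper outsources the diagonal step to \cite[Theorem~4]{KaMo16}, whereas you unpack that result explicitly via the rearrangement identity and the uniqueness of Bernoulli equilibrium states---which is exactly what that cited theorem does.
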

\begin{proof}
By a change of basis we may assume that $\mathsf{B}$ is upper triangular; as in the proof of the previous lemma, this change of basis does not affect the $\varphi^s$-equilibrium states of $\mathsf{B}$. By repeated use of Lemma \ref{le:4} we see that the $\varphi^s$-equilibrium states of $\mathsf{B}$ are unchanged if the off-diagonal entries of each $B_i$ are deleted. It now follows by \cite[Theorem 4]{KaMo16} that the number of $\varphi^s$-equilibrium states can be at most $(d-\lfloor s\rfloor){d \choose \lfloor s \rfloor}$. 
\end{proof}

We may now prove Theorem \ref{th:bound}. We first claim that to prove the theorem it is sufficient to suppose that $s\leq\frac{d}{2}$. Indeed, suppose that this case of the theorem has been proved and that $\mathsf{A}=(A_1,\ldots,A_N)\in GL_d(\mathbb{R})^N$ with $\frac{d}{2}<s<d\leq 4$. By Lemma \ref{le:5} the number of $\varphi^s$-equilibrium states of $\mathsf{A}$ is equal to the number of $\varphi^{d-s}$-equilibrium states of $\mathsf{A}'=(A_1',\ldots,A_N')$ defined by $A_i':=|\det A_i|^{1/(d-s)}(A_i^{-1})^T$. If $s$ is noninteger then by hypothesis this is at most $(d-\lfloor d-s\rfloor){d \choose \lfloor d-s\rfloor} = \lceil s \rceil  {d\choose \lceil s\rceil}$ as required; if this maximum is attained then by hypothesis $\mathsf{A}'$ must be triangularisable, which clearly implies the triangularisability of $\mathsf{A}$. If $s$ is an integer then similarly the number of $\varphi^{d-s}$-equilibrium states is at most ${d\choose d-s}={d \choose s}$ and if this maximum is attained then by hypothesis $\mathsf{A}'$, and hence $\mathsf{A}$, is triangularisable. This proves the claim.

We now proceed to prove Theorem \ref{th:bound} successively for $d=2$, $d=3$ and $d=4$. If $d=2$ and $0<s \leq 1$ then $\varphi^s=\|\cdot\|^s$ and the result follows directly by Lemma \ref{le:1}, which in view of the previous claim completes the proof for $d=2$. If $d=3$ and $0<s \leq 1$ then similarly the result follows by Lemma \ref{le:1}. To complete the case $d=3$ we suppose that $1<s\leq \frac{3}{2}$. In this case we have $\varphi^s(A_\iii)=\left\|A_\iii\right\|^{2-s} \left\|A_\iii^{\wedge 2}\right\|^{s-1}$, so if $\mathsf{A}$ is irreducible then by Lemma \ref{le:2} the number of $\varphi^s$-equilibrium states is not greater than $3$, which is strictly less than the desired upper limit of $(3-\lfloor s\rfloor){3 \choose \lfloor s \rfloor}=6$. Suppose lastly that $\mathsf{A}$ is reducible. If $\mathsf{A}$ is triangularisable then the result follows by Lemma \ref{le:1.5}, so we suppose otherwise. If $\mathsf{A}$ has a $1$-dimensional invariant subspace then we may write
\[X^{-1}A_iX = \begin{pmatrix}b_i&D_i\\0&C_i\end{pmatrix}\]
for each $i=1,\ldots,N$, where each $b_i$ is real, each $D_i$ is a $1 \times 2$ matrix, $C_i$ is a $2 \times 2$ matrix, and $X \in GL_3(\mathbb{R})$. If instead it has a $2$-dimensional invariant subspace then we may write
\[X^{-1}A_iX = \begin{pmatrix}C_i&D_i\\0&b_i\end{pmatrix}\]
for each $i=1,\ldots,N$, where each $b_i$ is real, each $D_i$ is a $2 \times 1$ matrix, $C_i$ is a $2\times 2$ matrix, and $X \in GL_d(\mathbb{R})$. In either case $(C_1,\ldots,C_N)$ must be irreducible since otherwise $\mathsf{A}$ would be upper triangularisable, contradicting our assumption. Using Lemma \ref{le:4} and (in the second case only) a permutation of the  basis it follows that the $\varphi^s$-equilibrium states of $\mathsf{A}$ are precisely the $\varphi^s$-equilibrium states of $\mathsf{A}'$ where
\[A_i' := \begin{pmatrix}b_i&0\\0&C_i\end{pmatrix}\]
for each $i=1,\ldots,N$, where each $b_i$ is real and where the $2 \times 2$ matrices $(C_1,\ldots,C_N)$ are irreducible. Using Lemma \ref{le:3}, every ergodic $\varphi^s$-equilibrium state of $\mathsf{A}'$ is either a $\varphi^s$-equilibrium state of $(C_1,\ldots,C_N)$, a $\varphi^{s-1}$-equilibrium state of $(|b_1|C_1,\ldots,|b_N|C_N)$, or a $\|\cdot\|$-equilibrium state of $(|b_1|^{s-1}C_1,\ldots,|b_N|^{s-1}C_N)$. By appeal to the case $d=2$ and the fact that the matrices $C_i$ are not simultaneously triangularisable there can be at most one equilibrium state of the first type; by the same principle, there can be at most one equilibrium state of the second type; and by Lemma \ref{le:1} there can be at most one equilibrium state of the third type. We have shown that if  $d=3$, $1<s\leq \frac{3}{2}$ and $\mathsf{A}$ is not upper triangularisable then no more than three ergodic equilibrium states can exist, and this completes the proof of the theorem in the case $d=3$.

We now consider the case $d=4$. If $s=2$ then the result follows by noting that $\varphi^2(A_\iii)=\left\|A_\iii^{\wedge 2}\right\|$ and appealing to Lemma \ref{le:1}. If $0<s \leq 1$ then the result follows from Lemma \ref{le:1} as before. By our initial claim it remains only to consider the case $s \in (1,2)$. If $\mathsf{A}$ is triangularisable then the result follows from Lemma \ref{le:1.5} as in the case $d=3$, so it remains only to show that there are strictly fewer than $(4-1){4 \choose 1}=12$ ergodic $\varphi^s$-equilibrium states when $1<s<2$ and $\mathsf{A}$ is not triangularisable. If $\mathsf{A}$ is irreducible then by Lemma \ref{le:2} there are not more than $4$ ergodic $\varphi^s$-equilibrium states for $\mathsf{A}$. We therefore assume for the remainder of the proof that $\mathsf{A}$ is reducible but not triangularisable.

If $\mathsf{A}$ has a $1$-dimensional or $1$-codimensional invariant subspace then by changing basis, eliminating off-diagonal blocks and changing the basis once more we may as in the case $d=3$ reduce to the problem of finding the $\varphi^s$-equilibrium states of $\mathsf{A}'=(A_1',\ldots,A_N')$ where
\[A_i' := \begin{pmatrix}b_i&0\\0&C_i\end{pmatrix}\]
for each $i=1,\ldots,N$, each $b_i$ is real, each $C_i$ has dimension $3 \times 3$ and $(C_1,\ldots,C_N)$ is not simultaneously triangularisable. By Lemma \ref{le:3} every ergodic $\varphi^s$-equilibrium state of $\mathsf{A}'$ is either a $\varphi^s$-equilibrium state of $(C_1,\ldots,C_N)$, a $\varphi^{s-1}$-equilibrium state of $(|b_1|C_1,\ldots,|b_N|C_N)$, or a $\|\cdot\|$-equilibrium state of $(|b_1|^{s-1}C_1,\ldots,|b_N|^{s-1}C_N)$. By appeal to the case $d=3$ there must be fewer than six equilibrium states of the first type, and by appeal to Lemma \ref{le:1} there can be no more than two equilibrium states each of the second and third types. In particular the number of ergodic $\varphi^s$-equilibrium states of $\mathsf{A}$ must be less than ten when a $1$-dimensional or $1$-codimensional invariant subspace exists but $\mathsf{A}$ is not triangularisable.

The final remaining case is that in which $\mathsf{A}$ has a $2$-dimensional invariant subspace but no $1$-dimensional or $1$-codimensional invariant subspace. By a suitable change of basis we may write
\[X^{-1}A_iX = \begin{pmatrix}B_i&D_i\\0&C_i\end{pmatrix}\]
for each $i=1,\ldots,N$ where each $B_i$, $C_i$ and $D_i$ is a $2\times 2$ real matrix and each of the tuples $(B_1,\ldots,B_N)$ and $(C_1,\ldots,C_N)$ is irreducible. Using Lemma \ref{le:4} it follows that the $\varphi^s$-equilibrium states of $\mathsf{A}$ are precisely the $\varphi^s$-equilibrium states of  $\mathsf{A}'=(A_1',\ldots,A_N')$ where
\[A_i' := \begin{pmatrix}B_i&0\\0&C_i\end{pmatrix}\]
for each $i=1,\ldots,N$. We note that for each $\iii \in \Sigma_N^*$ the four singular values of $A_\iii'$ are precisely $\alpha_1(B_\iii)$, $\alpha_2(B_\iii)$, $\alpha_1(C_\iii)$ and $\alpha_2(C_\iii)$ in some order, with $\alpha_1(B_\iii)$ preceding $\alpha_2(B_\iii)$ and $\alpha_1(C_\iii)$ preceding $\alpha_2(C_\iii)$. In particular if we define four potentials by $\Phi^1(\iii):=\alpha_1(B_\iii)\alpha_2(B_\iii)^{s-1}=\varphi^s(B_\iii)$, $\Phi^2(\iii):=\alpha_1(B_\iii)\alpha_1(C_\iii)^{s-1}=\|B_\iii\|\cdot\|C_\iii\|^{s-1}$, $\Phi^3(\iii):=\alpha_1(B_\iii)^{s-1}\alpha_1(C_\iii)=\|B_\iii\|^{s-1}\|C_\iii\|$, $\Phi^4(\iii):=\alpha_1(C_\iii)\alpha_2(C_\iii)^{s-1}=\varphi^s(C_\iii)$, then 
\[\varphi^s(A_\iii')=\max_{1 \leq j \leq 4}\Phi^j(\iii)\]
for every $\iii \in \Sigma_N^*$. It follows by Lemma \ref{le:max} that if $\mu$ is an ergodic $\varphi^s$-equilibrium state of $\mathsf{A}$ then it is an equilibrium state of one of the potentials $\Phi^j$. By appeal to the case $d=2$ and the irreducibility of $(B_1,\ldots,B_N)$ and $(C_1,\ldots,C_N)$ the potentials $\Phi^1$ and $\Phi^4$ can contribute at most one ergodic equilibrium state each, and by appeal to Lemma \ref{le:2} and irreducibility the potentials $\Phi^2$ and $\Phi^3$ can contribute at most two ergodic equilibrium states each. In particular the number of ergodic $\varphi^s$-equilibrium states of $\mathsf{A}$ in this case is not higher than six.  The proof of the theorem is complete.


\section{Separation of Lyapunov exponents}
The following result is a special case of \cite[Corollary 2.2]{BoMo17}:
\begin{lemma}\label{le:2parts}
Let $\mathsf{C}:=(C_1,\ldots,C_N) \in GL_{d_1}(\mathbb{R})^N$ and $\mathsf{D}:=(D_1,\ldots,D_N) \in GL_{d_2}(\mathbb{R})^N$, and let $\gamma,\delta>0$. Define a submultiplicative potential $\Phi \colon \Sigma_N^* \to (0,+\infty)$ by $\Phi(\iii):=\|C_\iii\|^\gamma \|D_\iii\|^\delta$. If $\mathsf{C}$ and $\mathsf{D}$ are both irreducible, and at least one of them is strongly irreducible, then $\Phi$ is quasimultiplicative.
\end{lemma}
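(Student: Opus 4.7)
The goal is to produce a finite set $F \subset \Sigma_N^*$ and $c > 0$ such that for every $\iii, \jjj \in \Sigma_N^*$ there exists $\kkk \in F$ satisfying both
\[
\|C_{\iii\kkk\jjj}\| \geq c\|C_\iii\|\|C_\jjj\| \quad\text{and}\quad \|D_{\iii\kkk\jjj}\| \geq c\|D_\iii\|\|D_\jjj\|;
\]
indeed, this then gives $\Phi(\iii\kkk\jjj) \geq c^{\gamma+\delta}\Phi(\iii)\Phi(\jjj)$. The first step is a reduction to a statement about bilinear forms on unit spheres. For any invertible $A$, writing $e_1(A), f_1(A)$ for its top right and left unit singular vectors, the identity $A^T f_1(A) = \|A\|\,e_1(A)$ combined with the Cauchy--Schwarz inequality yields
\[
\|A_\jjj A_\kkk A_\iii\| \geq \|A_\iii\|\|A_\jjj\| \cdot |\langle e_1(A_\jjj), A_\kkk f_1(A_\iii)\rangle|.
\]
Applied to both $\mathsf{C}$ and $\mathsf{D}$, it suffices to find $F$ and $c > 0$ such that for every quadruple of unit vectors $(u_C, v_C) \in S^{d_1-1} \times S^{d_1-1}$ and $(u_D, v_D) \in S^{d_2-1} \times S^{d_2-1}$, some $\kkk \in F$ yields both $|\langle u_C, C_\kkk v_C\rangle| \geq c$ and $|\langle u_D, D_\kkk v_D\rangle| \geq c$.

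A standard compactness-plus-continuity argument on the (compact) product of unit spheres reduces this further to proving, for each individual quadruple, the bare existence of \emph{some} $\kkk \in \Sigma_N^*$ making both bilinear forms nonzero; a finite open subcover then produces uniform $F$ and $c$. For each factor separately such a $\kkk$ is easily produced using irreducibility: since irreducibility of $\mathsf{C}$ implies irreducibility of $\mathsf{C}^T$, the orbit $\{C_\kkk^T u_C : \kkk \in \Sigma_N^*\}$ spans $\R^{d_1}$, so that $\langle u_C, C_\kkk v_C\rangle \neq 0$ for some $\kkk$, and analogously for $\mathsf{D}$.

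The main obstacle, and where strong irreducibility enters, is that the two sets of ``good'' words so obtained for $\mathsf{C}$ and $\mathsf{D}$ individually need not intersect, as neither is closed under any obvious combinatorial operation such as concatenation. Strong irreducibility of (say) $\mathsf{C}$ is the extra ingredient required: it guarantees that the projective orbit $\{[C_\kkk v_C] : \kkk \in \Sigma_N^*\} \subset \mathbb{P}(\R^{d_1})$ is not contained in any finite union of proper projective subspaces, so the ``$\mathsf{C}$-bad'' condition $C_\kkk v_C \in u_C^\perp$ cannot trap the whole orbit. Starting from any $\mathsf{D}$-good $\kkk_0$, one then seeks $\kkk$ of the form $\lll \kkk_0 \lll'$, where $\lll, \lll'$ are drawn from a finite family of ``rotating'' blocks chosen so as to move $C_\kkk v_C$ off $u_C^\perp$ while the $\mathsf{D}$-inner product is kept nonzero by the controlled contribution of $\kkk_0$. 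Making this argument uniform and quantitative is the substance of \cite[Corollary 2.2]{BoMo17}, of which the present lemma is the two-factor case.
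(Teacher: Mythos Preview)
The paper does not prove this lemma at all: it simply records it as a special case of \cite[Corollary~2.2]{BoMo17}. Your proposal ultimately does the same thing---you end by citing exactly that corollary---so the two are in agreement; the sketch you supply of the reduction to bilinear forms on unit spheres and the compactness argument is a correct outline of how the argument in \cite{BoMo17} proceeds, though (as you yourself note) the delicate step of simultaneously achieving nonvanishing for both factors via strong irreducibility is where the real work lies and is deferred to that reference.
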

The proof of Theorem \ref{th:sep} rests on the following simple lemma.
\begin{lemma}\label{le:mma}
Let $\Phi^1,\Phi^2 \colon \Sigma_N^* \to \mathbb{R}$ be sub-multiplicative and quasi-multiplicative potentials, and let $\mu$ be the unique equilibrium state of $\Phi^1$. Suppose that $\Phi^1(\iii) \geq \Phi^2(\iii)$ for every $\iii \in \Sigma_N^*$ and that $\Lambda(\Phi^1,\mu)=\Lambda(\Phi^2,\mu)$. Then there exists $C>0$ such that $C^{-1}\Phi^1(\iii) \leq \Phi^2(\iii) \leq C\Phi^1(\iii)$ for every $\iii \in \Sigma_N^*$.
\end{lemma}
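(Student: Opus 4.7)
The upper bound $\Phi^2(\iii) \leq \Phi^1(\iii)$ is given by hypothesis, so the content of the lemma is the lower bound $\Phi^2(\iii) \geq C^{-1} \Phi^1(\iii)$. My plan is to apply Proposition \ref{pr:fe} to both potentials and deduce the desired inequality by comparing the two resulting Gibbs-type bounds satisfied by the common equilibrium state $\mu$.

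First, I would show that $P(\Phi^1) = P(\Phi^2)$ and that $\mu$ is an equilibrium state of $\Phi^2$ as well. On the one hand, the inequality $\Phi^1 \geq \Phi^2$ together with the definition of the pressure gives $P(\Phi^1) \geq P(\Phi^2)$. On the other hand, since $\mu$ is an equilibrium state of $\Phi^1$ and $\Lambda(\Phi^1,\mu) = \Lambda(\Phi^2,\mu)$, we have
\[P(\Phi^1) = h(\mu) + \Lambda(\Phi^1,\mu) = h(\mu) + \Lambda(\Phi^2,\mu) \leq P(\Phi^2)\]
by the subadditive variational principle \eqref{eq:savp}. Hence $P(\Phi^1) = P(\Phi^2)$ and the last inequality is in fact an equality, so $\mu$ is an equilibrium state of $\Phi^2$.

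Next, since $\Phi^2$ is submultiplicative and quasimultiplicative, Proposition \ref{pr:fe} implies that $\mu$ is the \emph{unique} equilibrium state of $\Phi^2$ and that there exists a constant $C_2 > 0$ such that
\[C_2^{-1} e^{-|\iii| P(\Phi^2)} \Phi^2(\iii) \leq \mu([\iii]) \leq C_2 e^{-|\iii| P(\Phi^2)} \Phi^2(\iii)\]
for every $\iii \in \Sigma_N^*$. Similarly, applying Proposition \ref{pr:fe} to $\Phi^1$ yields a constant $C_1 > 0$ with
\[C_1^{-1} e^{-|\iii| P(\Phi^1)} \Phi^1(\iii) \leq \mu([\iii]) \leq C_1 e^{-|\iii| P(\Phi^1)} \Phi^1(\iii).\]
Using $P(\Phi^1) = P(\Phi^2)$, I would combine the lower bound on $\mu([\iii])$ coming from the first pair of inequalities with the upper bound coming from the second, yielding $C_1^{-1}\Phi^1(\iii) \leq C_2 \Phi^2(\iii)$, that is $\Phi^1(\iii) \leq C_1 C_2 \Phi^2(\iii)$. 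Setting $C := \max\{1, C_1 C_2\}$ and recalling $\Phi^2 \leq \Phi^1$ then gives the required two-sided inequality.

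There is no real obstacle: the whole argument is a short combination of the variational principle with the Gibbs property provided by Feng's theorem. The only subtlety is the uniqueness step — once one observes that $\mu$ must be the Feng--Gibbs measure for \emph{both} potentials simultaneously, the Gibbs estimates (which describe $\mu([\iii])$ in two different ways) force $\Phi^1$ and $\Phi^2$ to be comparable up to a multiplicative constant.
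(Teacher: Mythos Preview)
Your proof is correct and follows essentially the same route as the paper's own proof: use the variational principle and the hypothesis $\Lambda(\Phi^1,\mu)=\Lambda(\Phi^2,\mu)$ to deduce that $P(\Phi^1)=P(\Phi^2)$ and that $\mu$ is the (unique) equilibrium state of $\Phi^2$, then combine the two Gibbs bounds from Proposition~\ref{pr:fe}. If anything, your write-up is slightly more explicit than the paper's in tracking the pressure factors $e^{-|\iii|P(\Phi^j)}$ before cancelling them.
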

\begin{proof}
By Proposition \ref{pr:fe} each of $\Phi^1$ and $\Phi^2$ has a unique equilibrium state.
Since $\Phi^1 \geq \Phi^2$ it is clear from the definition of the pressure that $P(\Phi^1)\geq P(\Phi^2)$. We deduce
\[P(\Phi^2) \geq h(\mu)+\Lambda(\Phi^2,\mu) = h(\mu)+\Lambda(\Phi^1,\mu) = P(\Phi^1)\geq P(\Phi^2)\]
using the subadditive variational principle \eqref{eq:savp} and the hypothesis $\Lambda(\Phi^1,\mu)=\Lambda(\Phi^2,\mu)$. Hence $\mu$ is also the unique equilibrium state of $\Phi^2$ and therefore by Proposition \ref{pr:fe} there exist $C_1,C_2>0$ such that
\[C_1^{-1}\Phi^1(\iii) \leq \mu([\iii]) \leq C_1\Phi^1(\iii)\]
and
\[C_2^{-1}\Phi^2(\iii) \leq \mu([\iii]) \leq C_2\Phi^2(\iii)\]
for all $\iii \in \Sigma_N^*$. The result follows with $C:=C_1C_2$.
\end{proof}
\begin{proof}[Proof of Theorem \ref{th:sep}]
To prove (i) we take $\Phi^1(\iii):=\varphi^s(A_\iii)=\|A^{\wedge k}_\iii\|^{k+1-s}\|A^{\wedge (k+1)}_\iii\|^{s-k}$ and $\Phi^2(\iii):=\|A^{\wedge k}_\iii \|^{1+\frac{k-s}{2}} \|A^{\wedge(k+2)}_\iii\|^{\frac{s-k}{2}}$. By Lemma \ref{le:2parts} each of these two potentials is submultiplicative and quasimultiplicative. Suppose for a contradiction that $\Lambda_{k+1}(\mathsf{A},\mu)=\Lambda_{k+2}(\mathsf{A},\mu)$. We have
\begin{align*}\Phi^1(\iii)&=\left\|A^{\wedge k}_\iii\right\|^{k+1-s}\left\|A^{\wedge (k+1)}_\iii\right\|^{s-k}\\
& =\alpha_1(A_\iii)\cdots \alpha_k(A_\iii)\alpha_{k+1}(A_\iii)^{s-k}\\
&\geq \alpha_1(A_\iii)\cdots \alpha_k(A_\iii)\alpha_{k+1}(A_\iii)^{\frac{s-k}{2}} \alpha_{k+2}(A_\iii)^{\frac{s-k}{2}}\\
&=  \left\|A^{\wedge k}_\iii \right\|^{1+\frac{k-s}{2}} \left\|A^{\wedge(k+2)}_\iii\right\|^{\frac{s-k}{2}}=\Phi^2(\iii)\end{align*}
where the middle inequality follows from $\alpha_{k+1}(A_\iii)\geq \alpha_{k+2}(A_\iii)$. On the other hand
\begin{align*}\Lambda(\Phi^1,\mu) &= \sum_{i=1}^{k} \Lambda_i(\mathsf{A},\mu) +(s-k)\Lambda_{k+1}(\mathsf{A},\mu)\\
&= \sum_{i=1}^{k} \Lambda_i(\mathsf{A},\mu) + \left(\frac{s-k}{2}\right)\Lambda_{k+1}(\mathsf{A},\mu) +\left(\frac{s-k}{2}\right)\Lambda_{k+2}(\mathsf{A},\mu)=\Lambda(\Phi^2,\mu)\end{align*}
by hypothesis. It follows by Lemma \ref{le:mma} that there exists $C>0$ such that
\[C^{-1} \leq \frac{\left\|A^{\wedge k}_\iii \right\|^{k+1-s} \left\|A^{\wedge(k+1)}_\iii\right\|^{s-k}}{\left\|A^{\wedge k}_\iii \right\|^{1+\frac{k-s}{2}} \left\|A^{\wedge(k+2)}_\iii\right\|^{\frac{s-k}{2}}}\leq C \]
for every $\iii \in \Sigma_N^*$, and this simplifies to
\[C^{-1} \leq \frac{\alpha_{k+1}(A_\iii)^{\frac{s-k}{2}}}{\alpha_{k+2}(A_\iii)^{\frac{s-k}{2}}} \leq C.\]
It follows by Yamamoto's Theorem that for every $ \iii \in \Sigma_N^*$
\[\frac{\lambda_{k+1}(A_\iii)}{\lambda_{k+2}(A_\iii)}=\lim_{n \to \infty}  \left(\frac{\alpha_{k+1}(A_\iii^n)}{\alpha_{k+2}(A_\iii^n)}\right)^{\frac{1}{n}}=1,\]
where  $\lambda_i(B)$ denotes the absolute value of the $i^{\mathrm{th}}$-largest eigenvalue of the matrix $B$. This  
contradicts the hypothesis that $\mathsf{A}$ is $(k+1)$-proximal, and we conclude that $\Lambda_{k+1}(\mathsf{A},\mu)>\Lambda_{k+2}(\mathsf{A},\mu)$ as required.

The proof of (ii) is similar. In this case we take $\Phi^1(\iii):=\varphi^s(A_\iii)=\|A^{\wedge k}_\iii\|^{k+1-s}\|A^{\wedge (k+1)}_\iii\|^{s-k}$ and $\Phi^2(\iii):=\|A^{\wedge (k-1)}_\iii \|^{\frac{1-s+k}{2}} \|A^{\wedge(k+1)}_\iii\|^{\frac{1+s-k}{2}}$, and again by Lemma \ref{le:2parts} each of these two potentials is submultiplicative and quasimultiplicative. Assuming for a contradiction that  $\Lambda_{k}(\mathsf{A},\mu)=\Lambda_{k+1}(\mathsf{A},\mu)$, we note that
\begin{align*}\Phi^1(\iii)&=\left\|A^{\wedge k}_\iii\right\|^{k+1-s}\left\|A^{\wedge (k+1)}_\iii\right\|^{s-k} \\
&=\alpha_1(A_\iii)\cdots \alpha_k(A_\iii)\alpha_{k+1}(A_\iii)^{s-k}\\
&\geq \alpha_1(A_\iii)\cdots \alpha_{k-1}(A_\iii)\alpha_{k}(A_\iii)^{\frac{1+s-k}{2}} \alpha_{k+1}(A_\iii)^{\frac{1+s-k}{2}}\\
& = \left\|A^{\wedge (k-1)}_\iii \right\|^{\frac{1-s+k}{2}} \left\|A^{\wedge(k+1)}_\iii\right\|^{\frac{1+s-k}{2}}=\Phi^2(\iii)\end{align*}
where the middle inequality follows from $\alpha_{k}(A_\iii)\geq \alpha_{k+1}(A_\iii)$, and also 
\begin{align*}\Lambda(\Phi^1,\mu) &= \sum_{i=1}^{k-1} \Lambda_i(\mathsf{A},\mu) +\Lambda_k(\mathsf{A},\mu)+(s-k)\Lambda_{k+1}(\mathsf{A},\mu)\\
&= \sum_{i=1}^{k-1} \Lambda_i(\mathsf{A},\mu) + \left(\frac{1+s-k}{2}\right)\Lambda_{k}(\mathsf{A},\mu) +\left(\frac{1+s-k}{2}\right)\Lambda_{k+1}(\mathsf{A},\mu)=\Lambda(\Phi^2,\mu).\end{align*}
Hence by Lemma \ref{le:mma} that there exists $C>0$ such that
\[C^{-1} \leq \frac{\left\|A^{\wedge k}_\iii \right\|^{k+1-s} \left\|A^{\wedge(k+1)}_\iii\right\|^{s-k}}{\left\|A^{\wedge (k-1)}_\iii \right\|^{\frac{1-s+k}{2}} \left\|A^{\wedge(k+1)}_\iii\right\|^{\frac{1+s-k}{2}}}\leq C \]
and consequently
\[C^{-1} \leq \frac{\alpha_{k}(A_\iii)^{\frac{1+k-s}{2}}}{\alpha_{k+1}(A_\iii)^{\frac{1+k-s}{2}}} \leq C\]
for every $\iii \in \Sigma_N^*$. We likewise deduce the contradiction $\lambda_k(A_\iii)=\lambda_{k+1}(A_\iii)$ for every $\iii \in \Sigma_N^*$.
\end{proof}
\emph{Remark 1.} In the case where $k$ is an integer the irreducibility conditions on $\mathsf{A}$ may be relaxed slightly. The role of these conditions in the proof is to ensure that both $\Phi^1$ and $\Phi^2$ are quasi-multiplicative; in (i), if $s=k+1$ then $\Phi^1(\iii)$ reduces to $\left\|A_\iii\right\|^{\wedge(k+1)}$, so it suffices to assume that $\mathsf{A}^{\wedge \ell}$ is irreducible for $\ell \in \{k,k+1,k+2\}$ and strongly irreducible for either $\ell=k$ or $\ell=k+2$. Equivalently, if $s=k$ in (ii) then $\Phi^1(\iii)=\left\|A_\iii\right\|^{\wedge k}$ and we may assume that $\mathsf{A}^{\wedge \ell}$ is irreducible for $\ell \in \{k-1,k,k+1\}$ and strongly irreducible for one of $\ell=k-1$ and $\ell=k+1$.

\emph{Remark 2.} We observe that statements (i) and (ii) of Theorem \ref{th:sep} are in fact equivalent to one another via Lemma \ref{le:5}; we leave the details to the reader.

\section{When is a K\"aenm\"aki measure a Bernoulli measure?}

Suppose that $T_1,\ldots,T_N \colon \mathbb{R}^d \to \mathbb{R}^d$ are affine contractions with linear parts $A_1,\ldots,A_N \in GL_d(\mathbb{R})^N$ having affinity dimension $s \in (0,d)$, and let $X= \bigcup_{i=1}^N T_iX$ be their attractor. Recall that a \emph{self-affine measure} with respect to $T_1,\ldots,T_N$ is a Borel probability measure $m$ on $\mathbb{R}^d$ such that $\sum_{i=1}^N p_i m(T_i^{-1}B)=m(B)$ for all Borel sets $B \subset \mathbb{R}^d$ and for some fixed probability vector $(p_1,\ldots,p_N)$. If we define $\pi \colon \Sigma_N \to X$ by
\[\pi\left[(x_k)_{k=1}^\infty\right]:=\lim_{n \to \infty} T_{x_1}\cdots T_{x_n}v\]
for all $v \in \mathbb{R}^d$ then this limit exists, belongs to $X$ and is independent of $v$. A measure on $X$ is then self-affine if and only if it is the projection via $\pi$ of a Bernoulli measure on $\Sigma_N$.

Let $\rho(A)$ denote the spectral radius of the matrix $A$. It was shown by J.E. Hutchinson in \cite{Hu81} that if the affinities $T_i$ are all similarities -- that is, if $\rho(A_i)^{-1}A_i \in O(d)$ for every $i=1,\ldots,N$ -- and if the affinities $T_i$ satisfy the Open Set Condition, then there exists a self-affine measure on the attractor $X$ whose Hausdorff dimension is equal to $s$. We note the following partial converse to this statement in two dimensions:
\begin{proposition}\label{pr:onoob}
Let $T_1,\ldots,T_N \colon \mathbb{R}^2 \to \mathbb{R}^2$ be invertible affine contractions with linear parts $A_1,\ldots,A_N \in GL_2(\mathbb{R})$, and suppose that $(A_1,\ldots,A_N)$ is irreducible and that $\dimaff (A_1,\ldots,A_N)\in(0,2)$. If there exists a self-affine measure on the attractor $X=\bigcup_{i=1}^N T_iX$ with Hausdorff dimension equal to $s:=\dimaff (A_1,\ldots,A_N)$, then $T_1,\ldots,T_N$ are similitudes with respect to some inner product on $\mathbb{R}^2$.
\end{proposition}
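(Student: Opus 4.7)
The plan is to proceed in three stages, culminating in a compactness argument for the semigroup generated by $A_1,\ldots,A_N$ in $GL_2(\R)$. The first two stages reduce the problem to extracting almost-multiplicativity of the operator-norm potential on that semigroup, and the third -- the main obstacle -- uses irreducibility to convert this property into the existence of an invariant inner product in which each $A_i$ is a similitude.

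For the first two stages, I would combine Falconer's upper bound $\dimh(\pi_*\mu)\leq\diml(\mu)$ for the Hausdorff dimension of a self-affine measure with the subadditive variational principle $\diml(\mu)\leq\dimaff\mathsf{A} = s$, which under our hypothesis forces equality throughout and identifies $\mu$ as a $\varphi^s$-equilibrium state: $h(\mu)+\Lambda(\varphi^s,\mu)=P(\varphi^s)=0$. Under the irreducibility of $\mathsf{A}$, $\varphi^s$ is quasi-multiplicative -- by Lemma \ref{le:1} when $s\in(0,1]$, and by Lemma \ref{le:2parts} applied with $\mathsf{C}=\mathsf{A}$ and $\mathsf{D}=(|\det A_1|,\ldots,|\det A_N|)\in GL_1(\R)^N$ when $s\in[1,2)$ (exploiting the factorization $\varphi^s(A)=\|A\|^{2-s}|\det A|^{s-1}$ and the vacuous strong irreducibility of the one-dimensional tuple $\mathsf{D}$). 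Proposition \ref{pr:fe} then certifies $\mu$ as the unique $\varphi^s$-equilibrium state and delivers the Gibbs estimate $\mu([\iii])\asymp\varphi^s(A_\iii)$. Combining this with the Bernoulli identity $\mu([\iii\jjj])=\mu([\iii])\mu([\jjj])$ yields $\varphi^s(A_{\iii\jjj})\asymp\varphi^s(A_\iii)\varphi^s(A_\jjj)$ uniformly in $\iii,\jjj$, and since $|\det|$ is exactly multiplicative and $2-s>0$ this reduces to the almost-multiplicativity $\|A_{\iii\jjj}\|\geq c\|A_\iii\|\|A_\jjj\|$ for some uniform $c>0$. Iterating on powers gives $\|A_\kkk^n\|\geq c^{n-1}\|A_\kkk\|^n$, so taking $n$-th roots as $n\to\infty$ produces the uniform bound $\rho(A_\kkk)\geq c\alpha_1(A_\kkk)$ for every word $\kkk$.

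The principal obstacle is the third stage. The strategy is to show first that no product $A_\kkk$ can have real eigenvalues of distinct absolute values: otherwise, writing $A_\kkk$ diagonally in the basis of its eigenlines $e^\pm$, irreducibility of $\mathsf{A}$ furnishes a word $A_\lll$ whose image $A_\lll e^+$ either lies exactly in the $e^-$-line (in the merely irreducible setting, where the semigroup permutes two invariant lines) or is at arbitrarily small angle to it (in the strongly irreducible setting, by density of orbits in $\mathbb{RP}^1$); an explicit calculation in the $\{e^+,e^-\}$-basis shows that $\alpha_1(A_\kkk^m A_\lll)/\rho(A_\kkk^m A_\lll)$ then grows like $|\lambda_1(A_\kkk)/\lambda_2(A_\kkk)|^{m/2}$ as $m\to\infty$, contradicting the uniform bound from the preceding stage. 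Consequently every product $A_\kkk$ has $|\lambda_1(A_\kkk)|=|\lambda_2(A_\kkk)|$, whence $\rho(A_\kkk)^2=|\det A_\kkk|=\alpha_1(A_\kkk)\alpha_2(A_\kkk)$, and the uniform bound upgrades to $\alpha_1(A_\kkk)/\alpha_2(A_\kkk)\leq 1/c^2$ over all words. Thus the group generated by $A_1,\ldots,A_N$ is bounded modulo scalars in $GL_2(\R)$ (inverses remain bounded because $\alpha_1$ and $\alpha_2$ swap roles under inversion), and its closure modulo scalars is a compact subgroup of $PGL_2(\R)$, necessarily conjugate to a subgroup of $PO(2)$; pulling back the standard inner product via this conjugation produces an inner product on $\R^2$ with respect to which every $A_i/\sqrt{|\det A_i|}$ is orthogonal, so that each $T_i$ is a similitude.
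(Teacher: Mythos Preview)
Your first two stages are essentially the paper's own argument: identify the Bernoulli lift of a full-dimensional self-affine measure as the unique $\varphi^s$-equilibrium state, and combine the Gibbs estimate from Proposition~\ref{pr:fe} with the Bernoulli product structure. The divergence from the paper, and the problem, is in the third stage.

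The paper does not pass through the inequality $\rho(A_\kkk)\geq c\|A_\kkk\|$. Instead it applies the Gibbs bound to the cylinders $[\iii^n]$, $[\jjj^n]$ and $[(\jjj\iii)^n]$ with the \emph{same} constant $C$, obtaining $C^{-3}\|A_\iii^n\|\,\|A_\jjj^n\|\leq\|(A_\iii A_\jjj)^n\|\leq C^{3}\|A_\iii^n\|\,\|A_\jjj^n\|$ uniformly in $n$; Gelfand's formula then yields the \emph{exact} identity $\rho(A_\iii A_\jjj)=\rho(A_\iii)\rho(A_\jjj)$, and the Protasov--Voynov theorem (\cite[Theorem~2]{PrVo17}) converts multiplicativity of $\rho$ on an irreducible semigroup directly into simultaneous conjugacy into $\R^*\!\cdot O(2)$.

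Your route discards too much. The inequality $\rho(A_\kkk)\geq c\|A_\kkk\|$ alone, even together with strong irreducibility, does \emph{not} preclude proximal elements, and your density claim fails. Take $\Gamma=\langle A,B\rangle$ with $A=\mathrm{diag}(2,\tfrac12)$ and $B=\begin{pmatrix}1&1\\1&2\end{pmatrix}$. This semigroup is strongly irreducible and contains the proximal element $A$, yet the $\Gamma$-orbit of $e^+=(1,0)$ lies in slopes $[0,\phi]$ (since $A$ contracts slopes and $B$ has attracting fixed slope $\phi$), so it never approaches $e^-$. Moreover $\rho(M)\geq c\|M\|$ holds uniformly: because $A$ and $B$ are symmetric, $\Gamma^T=\Gamma$, so both the first column and the first row of any $M\in\Gamma$ have slope at most $\phi$; this forces $M_{12},M_{21}\le\phi M_{11}$ and $M_{22}\le 2M_{12}\le 2\phi M_{11}$, whence $\|M\|\lesssim M_{11}\leq\rho(M)$. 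So both the density step and the conclusion you draw from $\rho\geq c\|\cdot\|$ fail here. (This $\Gamma$ does \emph{not} satisfy the full almost-multiplicativity of $\|\cdot\|$ you derived earlier --- indeed $\|B^k\|/\|B\|^k\to 0$ --- which is why it is not a counterexample to the proposition itself; but it shows that once you weaken to $\rho\geq c\|\cdot\|$, the remaining argument cannot be completed as written.) The fix is to retain the uniform-in-$n$ Gibbs bounds on powers and extract exact multiplicativity of $\rho$, after which either Protasov--Voynov or a direct $2\times 2$ argument finishes the proof.
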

\begin{proof}
The proof reprises parts of \cite[\S5]{Mo17a}; we include these parts in order to better illuminate the problem which follows. If $\mu \in \mathcal{M}_\sigma$ then one may show that $\dim_H \pi_*\mu \leq s$ with equality only if $\mu$ is a $\varphi^s$-equilibrium state of $(A_1,\ldots,A_N)$, see \cite{Ka04b}. Let $m=\pi_*\mu$ be the hypothesised self-affine measure of Hausdorff dimension $s$, so that $\mu$ is a $\varphi^s$-equilibrium state of $(A_1,\ldots,A_N)$ which is a Bernoulli measure. Suppose firstly that $s \leq 1$ and therefore $\varphi^s(A_\iii)=\|A_\iii\|^s$ for every $\iii \in \Sigma_N^*$. The measure $\mu$ then satisfies
\[C^{-1}\|A_\iii\|^s \leq e^{|\iii| P(\mathsf{A},\varphi^s)} \mu([\iii]) \leq C\|A_\iii\|^s\]
for every $\iii \in \Sigma_N$ by the combination of Lemma \ref{le:1} and Proposition \ref{pr:fe}. Let $\iii,\jjj \in \Sigma_N$ be arbitrary; then we deduce
\[C^{-1}\|A_\iii^n\|^s \leq e^{n|\iii| P(\mathsf{A},\varphi^s)} \mu([\iii^n])=e^{n|\iii| P(\mathsf{A},\varphi^s)} \mu([\iii])^n \leq C\|A_\iii^n\|^s,\]
\[C^{-1}\|A_\jjj^n\|^s \leq e^{n|\jjj| P(\mathsf{A},\varphi^s)} \mu([\jjj^n])=e^{n|\jjj| P(\mathsf{A},\varphi^s)} \mu([\jjj])^n \leq C\|A_\jjj^n\|^s\]
and
\[C^{-1}\left\|(A_\iii A_\jjj)^n\right\|^s \leq e^{n(|\iii|+|\jjj|) P(\mathsf{A},\varphi^s)} \mu([(\jjj\iii)^n])=e^{n(|\iii|+|\jjj|) P(\mathsf{A},\varphi^s)} \mu([\iii])^n\mu([\jjj])^n \leq C\|(A_\iii A_\jjj)^n\|^s\]
for every $n \geq 1$, where $\kkk^n$ refers to the word formed by concatenating $n$ successive copies of $\kkk$ and where we have used the fact that $\mu$ is a Bernoulli measure. In particular
\[C^{-3/s} \|A_\iii^n\|\cdot\|A_\jjj^n\| \leq \left\|\left(A_\iii A_\jjj\right)^n\right\| \leq C^{3/s} \|A_\iii^n\|\cdot\|A_\jjj^n\|\]
for every $n \geq 1$ and $\iii,\jjj \in \Sigma_N$ so that by Gelfand's formula
\[\rho(A_\iii A_\jjj)=\rho(A_\iii)\rho(A_\jjj)\]
for all $\iii,\jjj \in \Sigma_N$. Thus the semigroup $\Gamma:=\{A_\iii \colon \iii \in \Sigma_N^*\}\subset GL_2(\mathbb{R})$ is irreducible and has the property that $\rho \colon \Gamma \to \mathbb{R}$ is multiplicative; but by a theorem of Protasov and Voynov \cite[Theorem 2]{PrVo17} this implies that there exists $B \in GL_2(\mathbb{R})$ such that $\rho(A)^{-1}B^{-1}AB \in O(2)$ for all $A \in \Gamma$. It follows in particular that for all $i=1,\ldots,N$ the matrix $\rho(A_i)^{-1}A_i$ is an isometry with respect to the inner product $(u,v)\mapsto \langle Bu,Bv\rangle$, and therefore each $T_i$ is a similitude with respect to that same inner product. This proves the proposition in the case $s \leq 1$. In the case $1 \leq s<2$ we similarly obtain $\rho(A_\iii A_\jjj)\equiv \rho(A_\iii)\rho(A_\jjj)$ by using $\varphi^s(A_\iii)=|\det A_\iii|^{s-1} \|A_\iii\|^{2-s}$ in place of $\|A_\iii\|^s$ throughout.
\end{proof}
It is natural to ask whether the above result may be extended beyond the planar case. We make the following conjecture:
\begin{conjecture}\label{co:xswain}
Let $T_1,\ldots,T_N \to \mathbb{R}^d \to \mathbb{R}^d$ be invertible affine contractions with linear parts $A_1,\ldots,A_N \in GL_d(\mathbb{R})$, and suppose that $s:=\dimaff (A_1,\ldots,A_N)\in (0,d)$. Suppose additionally that $(A_1^{\wedge \lfloor s \rfloor},\ldots,A_N^{\wedge \lfloor s\rfloor})$ and $(A_1^{\wedge \lceil s \rceil},\ldots,A_N^{\wedge \lceil s\rceil})$ are both irreducible and that at least one of them is strongly irreducible. If there exists a self-affine measure on the attractor $X=\bigcup_{i=1}^N T_iX$ with Hausdorff dimension equal to $s:=\dimaff (A_1,\ldots,A_N)$, then $T_1,\ldots,T_N$ are similitudes with respect to some inner product on $\mathbb{R}^d$.
\end{conjecture}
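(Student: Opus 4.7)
The plan is to carry out the argument of Proposition~\ref{pr:onoob} in full generality, replacing the spectral radius with the eigenvalue analogue of the singular value function, and thereby to reduce the conjecture to a semigroup-theoretic statement generalising the theorem of Protasov and Voynov~\cite{PrVo17}.

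Suppose that $m = \pi_*\mu$ is the hypothesised self-affine measure of Hausdorff dimension $s$. By K\"aenm\"aki's theorem \cite{Ka04b}, $\mu$ must be a $\varphi^s$-equilibrium state of $\mathsf{A}$, and being the lift of a self-affine measure it is a Bernoulli measure for some probability vector $(p_1,\ldots,p_N)$. The irreducibility hypotheses of the conjecture, combined with Lemma~\ref{le:2parts}, imply that the potential $\iii \mapsto \varphi^s(A_\iii)$ is quasimultiplicative, so Proposition~\ref{pr:fe} yields a constant $C > 0$ such that
\[ C^{-1}\varphi^s(A_\iii) \leq e^{|\iii| P(\mathsf{A},\varphi^s)}\mu([\iii]) \leq C\varphi^s(A_\iii) \]
for every $\iii \in \Sigma_N^*$. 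I would then apply this Gibbs estimate to the words $\iii^n$ and $(\iii\jjj)^n$ for arbitrary $\iii,\jjj \in \Sigma_N^*$, exploit the Bernoulli identities $\mu([\iii^n])=\mu([\iii])^n$ and $\mu([(\iii\jjj)^n])=\mu([\iii])^n\mu([\jjj])^n$, take $n$-th roots and let $n\to\infty$ to obtain the limit identity $\lim_n \varphi^s(A_\iii^n)^{1/n} = e^{|\iii|P(\mathsf{A},\varphi^s)}\mu([\iii])$ together with its analogues for $\jjj$ and $\iii\jjj$. Combining these with Yamamoto's theorem $\lim_n \alpha_k(B^n)^{1/n}=\lambda_k(B)$ would then show that the weighted product of absolute eigenvalues
\[ \psi^s(A) := \prod_{i=1}^{\lfloor s\rfloor}\lambda_i(A) \cdot \lambda_{\lceil s\rceil}(A)^{s-\lfloor s\rfloor} \]
is multiplicative on the semigroup $\Gamma := \{A_\iii : \iii \in \Sigma_N^*\}$, in the sense that $\psi^s(AB)=\psi^s(A)\psi^s(B)$ for all $A,B \in \Gamma$.

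The conjecture is thereby reduced to the following algebraic question: if $\Gamma \subset GL_d(\mathbb{R})$ is a semigroup whose actions on $\bigwedge^{\lfloor s\rfloor}\mathbb{R}^d$ and $\bigwedge^{\lceil s\rceil}\mathbb{R}^d$ are irreducible with at least one strongly so, and if $\psi^s$ is multiplicative on $\Gamma$, must every element of $\Gamma$ be a similitude with respect to a common inner product on $\mathbb{R}^d$? When $s$ is an integer one has $\psi^s(A)=\lambda_1(A^{\wedge s})$, so the Protasov--Voynov theorem applied to the irreducible semigroup $\Gamma^{\wedge s}$ forces the absolute eigenvalues of $A^{\wedge s}$ -- and hence, by comparing the products $\lambda_{i_1}(A)\cdots\lambda_{i_s}(A)$ across choices of $i_1<\cdots<i_s$, the absolute eigenvalues of $A$ itself -- all to coincide. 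The identity $\rho(AB)^d=|\det AB|=\rho(A)^d\rho(B)^d$ then promotes $\rho$ to a multiplicative function on $\Gamma$, and a second application of Protasov--Voynov produces the required common inner product on $\mathbb{R}^d$. In the boundary cases $s\in(0,1)\cup(d-1,d)$ the function $\psi^s$ already reduces to $\rho(\cdot)^s$ (via Lemma~\ref{le:5} in the upper range), so the same argument closes the proof.

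The essential obstacle is the intermediate range $1<s<d-1$ with $s$ non-integer: the fractional-power weight in $\psi^s$ prevents any direct appeal to Protasov--Voynov. A natural strategy would be to show that multiplicativity of $\psi^s$ on $\Gamma$, together with the stated irreducibility hypotheses, already forces the non-proximality identity $\lambda_{\lfloor s\rfloor}(A)=\lambda_{\lceil s\rceil}(A)$ for every $A\in\Gamma$ -- a statement dual in spirit to Theorem~\ref{th:sep}. Iterating this observation and using Lemma~\ref{le:5} to cover the remaining consecutive pairs of eigenvalues would then force all absolute eigenvalues of each element of $\Gamma$ to coincide, reducing matters to the integer case treated above. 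Extracting this non-proximality consequence from multiplicativity of $\psi^s$ appears to require genuinely new ideas, plausibly drawn from real algebraic geometry or the theory of linear algebraic groups along the lines of the techniques mentioned after Theorem~\ref{th:bound}.
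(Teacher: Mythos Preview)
The statement you are attempting is Conjecture~\ref{co:xswain}, which the paper explicitly leaves \emph{open}: there is no proof in the paper to compare against. What the paper does is exactly your first two paragraphs --- it uses the Gibbs property (via Lemma~\ref{le:2parts} and Proposition~\ref{pr:fe}) together with the Bernoulli structure of $\mu$ to deduce the multiplicativity identity \eqref{eq:qe} for the weighted eigenvalue product $\psi^s$, and then formulates Questions~\ref{qu:uq}--5.5 asking whether this multiplicativity forces $\Gamma$ into a compact group. Your reduction is therefore identical to the paper's, and your candid acknowledgement that the intermediate non-integer range ``appears to require genuinely new ideas'' is precisely the content of those open questions.

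Where you go \emph{beyond} the paper is in the special cases. Your treatment of $s\in(0,1)$ and, via Lemma~\ref{le:5}, of $s\in(d-1,d)$ is correct and is the natural higher-dimensional extension of Proposition~\ref{pr:onoob}. Your integer-$s$ argument is also essentially correct: Protasov--Voynov applied to the irreducible semigroup $\Gamma^{\wedge s}$ forces every $A^{\wedge s}$ to be a scalar multiple of an orthogonal matrix, hence all products $|\lambda_{i_1}(A)\cdots\lambda_{i_s}(A)|$ coincide, hence all $|\lambda_i(A)|$ coincide, hence $\rho$ is multiplicative on $\Gamma$. One small point you glide over: the second application of Protasov--Voynov needs $\Gamma$ itself (not just $\Gamma^{\wedge s}$) to be irreducible. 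This does follow --- any proper invariant subspace $V\subset\mathbb{R}^d$ of dimension $k$ yields a proper nonzero $\Gamma^{\wedge s}$-invariant filtration step (the span of wedges with at least $\max(0,s-d+k)+1$ factors in $V$) --- but it deserves a sentence.

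Your proposed strategy for the remaining range (deduce $\lambda_{\lfloor s\rfloor}(A)=\lambda_{\lceil s\rceil}(A)$ from multiplicativity of $\psi^s$, then ``iterate'') is more speculative than you perhaps indicate. Even granting the non-proximality step, the iteration is unclear: the irreducibility hypotheses of the conjecture constrain only $\mathsf{A}^{\wedge\lfloor s\rfloor}$ and $\mathsf{A}^{\wedge\lceil s\rceil}$, and Lemma~\ref{le:5} only exchanges $s$ for $d-s$, so there is no obvious mechanism to propagate equality of absolute eigenvalues to \emph{all} consecutive pairs. This is not a flaw in a proof --- you are not claiming one --- but the sketch oversells how close the strategy is to working.
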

Conjecture \ref{co:xswain} may be restated in terms of K\"aenm\"aki measures as follows: if $A_1,\ldots,A_N \in GL_d(\mathbb{R})$ are contractions with affinity dimension $s \in (0,d)$, 
and $(A_1^{\wedge \lfloor s \rfloor},\ldots,A_N^{\wedge \lfloor s\rfloor})$ and $(A_1^{\wedge \lceil s \rceil},\ldots,A_N^{\wedge \lceil s\rceil})$ are both irreducible with at least one being strongly irreducible, then either the $A_i$'s are simultaneously conjugate to similarity matrices or their (unique) K\"aenm\"aki measure is not a Bernoulli measure.

The irreducibility hypothesis on the exterior powers of the matrices $A_i$ implies using \cite[Theorem 3]{KaMo16} that there is a unique $\varphi^s$-equilibrium state for $(A_1,\ldots,A_N)$ which has the Gibbs property
\[C^{-1}\varphi^s(A_\iii) \leq e^{|\iii| P(\mathsf{A},\varphi^s)} \mu([\iii]) \leq C\varphi^s(A_\iii)\]
for all $\iii \in \Sigma_N^*$. By following the argument of Proposition \ref{pr:onoob} we find that the inequality
\[C^{-3} \varphi^s(A_\iii^n) \varphi^s(A_\jjj^n) \leq \varphi^s\left(\left(A_\iii A_\jjj\right)^n\right) \leq C^3 \varphi^s(A_\iii^n) \varphi^s(A_\jjj^n)\]
is satisfied for all $\iii,\jjj \in \Sigma_N^*$ and $n \geq 1$. Let $k:=\lfloor s\rfloor$. Using Gelfand's formula together with the identity $\varphi^s(A)=\|A^{\wedge \lfloor s \rfloor}\|^{1+\lfloor s \rfloor-s} \|A^{\wedge \lceil s\rceil }\|^{s-\lfloor s\rfloor} $ it follows that 
\begin{equation}\label{eq:qe}\rho((A_\iii A_\jjj)^{\wedge \lfloor s\rfloor})^{1+\lfloor s\rfloor -s} \rho((A_\iii A_\jjj)^{\wedge \lceil s\rceil})^{s-\lfloor s\rfloor }  = \rho(A_\iii^{\wedge \lfloor s\rfloor})^{1+\lfloor s\rfloor -s} \rho(A_\iii^{\wedge \lceil s\rceil})^{s-\lfloor s\rfloor }  \rho(A_\jjj^{\wedge \lfloor s\rfloor})^{1+\lfloor s\rfloor -s} \rho(A_\jjj^{\wedge \lceil s\rceil})^{s-\lfloor s\rfloor }  \end{equation}
for all $\iii,\jjj \in \Sigma_N^*$.
In order to prove Conjecture \ref{co:xswain} by the method of Proposition \ref{pr:onoob} we would need to know under what circumstances the equation \eqref{eq:qe} implies that the matrices $\rho(A_\iii)^{-1}A_\iii$ belong to a bounded subgroup of $GL_d(\mathbb{R})$. We note that by replacing $A_i$ with $(\rho(A_i^{\wedge \lfloor s\rfloor})^{1+\lfloor s\rfloor -s} \rho(A_i^{\wedge \lceil s\rceil})^{s-\lfloor s\rfloor })^{-1/s}A_i$ for each $i=1,\ldots,N$ this problem may be reduced to the study of semigroups $\Gamma \subset GL_d(\mathbb{R})$ with the property that
\begin{equation}\label{eq:fw}
\rho\left(A^{\wedge \lfloor s\rfloor}\right)^{1+\lfloor s\rfloor -s} \rho\left(A^{\wedge \lceil s\rceil}\right)^{s-\lfloor s\rfloor }  = 1 
\end{equation}
for all $A \in \Gamma$. The structure of such semigroups is at the present time opaque. We therefore ask the following questions in support of Conjecture \ref{co:xswain}:
\begin{question}\label{qu:uq}
Let $\Gamma \subset GL_d(\mathbb{R})$ be a semigroup and let $s \in (0,d)$, and suppose that the equation \eqref{eq:fw} is satisfied for all $A \in \Gamma$. Assume furthermore that the sets $\{A^{\wedge \lfloor s \rfloor} \colon A \in \Gamma\}$ and $\{A^{\wedge \lceil s \rceil}\colon A \in \Gamma\}$ are strongly irreducible. Does it follow that $\Gamma$ is contained in a bounded subgroup of $GL_d(\mathbb{R})$?
\end{question}
We note that some degree of irreducibility must be assumed in the above question since otherwise counterexamples consisting only of diagonal matrices may be constructed. Such examples suggest the following question:
\begin{question}
Let $\Gamma \subset GL_d(\mathbb{R})$  be a semigroup and let $s \in (0,d)$, and suppose that
\[\rho((AB)^{\wedge \lfloor s\rfloor})^{1+\lfloor s\rfloor -s} \rho((AB)^{\wedge \lceil s\rceil})^{s-\lfloor s\rfloor }  = \rho(A^{\wedge \lfloor s\rfloor})^{1+\lfloor s\rfloor -s} \rho(A^{\wedge \lceil s\rceil})^{s-\lfloor s\rfloor }  \rho(B^{\wedge \lfloor s\rfloor})^{1+\lfloor s\rfloor -s} \rho(B^{\wedge \lceil s\rceil})^{s-\lfloor s\rfloor }  \]
for all $A,B \in \Gamma$. Does it follow that in fact $\rho((AB)^{\wedge \lfloor s\rfloor})=\rho(A^{\wedge \lfloor s\rfloor})\rho(B^{\wedge \lfloor s\rfloor})$ and $\rho((AB)^{\wedge \lceil s\rceil})=\rho(A^{\wedge \lceil s\rceil})\rho(B^{\wedge \lceil s\rceil})$ for all $A,B \in \Gamma$?
\end{question}
By the conventions of algebraic geometry a function $\phi \colon GL_d(\mathbb{R}) \to \mathbb{R}$ is called a \emph{polynomial} if there exists a basis on $\mathbb{R}^d$ with respect to which $\phi(A)$ is a polynomial function of the entries of the matrix representation of $A$ and of the additional variable $(\det A)^{-1}$. We recall that a subset $Z$ of $GL_d(\mathbb{R})$ is called \emph{Zariski dense} if every polynomial $GL_d(\mathbb{R}) \to \mathbb{R}$ which vanishes on $Z$ also vanishes on $GL_d(\mathbb{R})$. We note that if a subset of $GL_d(\mathbb{R})$ is contained in a bounded subgroup of $GL_d(\mathbb{R})$ then it is not Zariski dense, since every bounded subgroup of $GL_d(\mathbb{R})$ is conjugate to $O(d)$ and therefore preserves an inner product, implying the existence of a polynomial which vanishes on the subgroup but not on $GL_d(\mathbb{R})$. We ask the following weaker form of Question \ref{qu:uq}:
\begin{question}
Let $\Gamma \subset GL_d(\mathbb{R})$ be a semigroup and let $s \in (0,d)$, and suppose that the equation \eqref{eq:fw} is satisfied for all $A \in \Gamma$. Is it possible that $\Gamma$ is Zariski dense?
\end{question}

\section{Acknowledgements}

This research was supported by the Leverhulme Trust (Research Project Grant number RPG-2016-194). The author thanks J. Bochi for helpful conversations.

\bibliographystyle{acm}
\bibliography{fox-like}

\end{document}